\newcommand{\A}{\mathcal{A}}
 \newcommand{\R}{\mathbb{R}}
 \newcommand{\Z}{\mathbb{Z}}
\newcommand{\E}{\mathsf{E}}
 \newcommand{\N}{\mathbb{N}}
\renewcommand{\P}{\mathsf{P}}
\newcommand{\Q}{\mathsf{Q}}
 \newcommand{\bx}{{\bf x}}
 \newcommand{\by}{{\bf y}}
 \newcommand{\bz}{{\bf z}}
 \newcommand{\be}{{\bf e}}
\newcommand{\hX}{{\hat X}}
\def\|{{\,|\, }}
\newcommand{\bea}{\begin{eqnarray}}
\newcommand{\eea}{\end{eqnarray}}
\renewcommand{\A}{{\cal A}}
\newcommand{\bean}{\begin{eqnarray*}}
\newcommand{\eean}{\end{eqnarray*}}
\newcommand{\nn}{\nu}
\newcommand{\X}{{\cal X}}
\newcommand{\muNG}{\mu^{(N)}_{\alpha,\beta, G}}
\newcommand{\asm}{{\rm as} ~ m \to \infty}
\newcommand\ga{\alpha}
\newcommand\gb{\beta}
\newcommand\kk{\kappa}
\newcommand\gl{\lambda}
 \newtheorem{theorem}{Theorem}[section]
 \newtheorem{remark}{Remark}[section]
\newtheorem{proposition}{Proposition}[section]
\newtheorem{example}{Example}[section]
\newtheorem{assumption}{Assumption}[section]
\numberwithin{equation}{section}
\theoremstyle{definition}
\begin{document}

\date{}

\title{Probabilistic models motivated by  \\
cooperative sequential adsorption}

\author{
Vadim Shcherbakov
\footnote{Department of Mathematics, Royal Holloway,  University of London, UK. \newline
\indent  Email: vadim.shcherbakov@rhul.ac.uk
}\\
{\small  Royal Holloway,  University of London}
}
\maketitle

\begin{abstract}
{\small 
This survey concerns  probabilistic models motivated by
cooperative sequential adsorption (CSA) models. CSA models are widely used in physics and chemistry for modelling adsorption  processes in which adsorption rates  depend on the spatial configuration  of already adsorbed particles. Corresponding probabilistic
models describe random sequential allocation of particles either 
in a subset of Euclidean space,
or at vertices of a graph. Depending on a technical setup
these probabilistic models are stated 
  in terms of spatial or integer-valued interacting birth-and-death processes. 
In this survey we consider 
several such models that have been studied in recent years.
}
\end{abstract}

\bigskip 

\noindent {{\bf Keywords:} 
 cooperative sequential adsorption, maximum likelihood estimation, interacting urn model, interacting spin model,  Markov chain, reversibility, electric networks, positive recurrence, transience, explosion}

\bigskip 

 \noindent {{\bf Subject Classification :} 
  MSC 60J10, 60J20, 60J27, 60J28, 60K35, 62F10, 62F12, 62M30}

\tableofcontents

\section{Introduction}

This survey  concerns  probabilistic models motivated by
cooperative sequential adsorption (CSA) models. 
Adsorption is a real life phenomenon which  can be thought of as follows.
Consider particles (e.g. molecules) diffusing around 
a surface of a material. When a particle hits the surface, it can be retained
(adsorbed) by the latter. CSA describes adsorption 
 process in which adsorption rates  depend on the spatial configuration  of existing  particles.  In other words,  particles  adsorb to a surface subject to interaction with previously adsorbed particles. For example,  adsorbed particles
can either attract, or repel subsequent arrivals. 
These types of  interactions 
 are  common for many physical, chemical  and biological processes.

In physics and chemistry cooperative effects in adsorption  are usually studied 
by experiments and by computer  simulations of an appropriate 
CSA model. CSA model is a probabilistic model
 for random sequential deposition of  particles
(e.g. points or objects of various shape)
either in a bounded region of  a continuous space, or at vertices (sites)
of a graph (e.g. lattice).
In such a model a particle is placed at location $x$ with the probability 
that is proportional to a specified function of the 
current configuration of existing particles  in a neighbourhood of $x$.
Such a construction is technically flexible
 for modelling both attractive  and repulsive interaction
 between a new particle and previously adsorbed particles,
 and can be used in modelling CSA like processes in many real-life 
 applications.

The paper is organised as follows. 
In Section~\ref{CSA-continuum} we introduce a model for random sequential deposition of particles (points) in a bounded subset of Euclidean space. 
This continuous model can be naturally interpreted as 
 a model for  time series of spatial locations. Fitting the model 
 to data requires estimation of model parameters. We  show that 
 statistical inference for the model parameters can be based on 
maximum  likelihood estimation. In particular, we describe the corresponding 
estimation procedure and discuss asymptotic properties of maximum likelihood estimators.
In Section~\ref{graph-model} we consider a discrete model 
random sequential deposition of particles at vertices of a graph
(a growth process with  graph based interaction). 
A probabilistic model obtained from the growth process 
by allowing  deposited particles to depart is considered in Section~\ref{revers}.
This model is motivated by adsorption processes
 in which adsorbed particles can be released from the adsorbing substrate. 
 The model is described in terms of a reversible Markov chain and can be 
 regarded as an interacting spin model and closely related to 
 such well known models of statistical physics and  
interacting particle systems as the  contact process and the Ising model.
Finally, in Section~\ref{CSA-point}  we consider  a point process 
motivated by the CSA model. The point process is a probability measure given by a density 
with respect to Poisson point process (in finite dimensional Euclidean space)
and belongs to a class of point processes used in spatial statistics 
for modelling point patterns.

\section{Continuous CSA model}
\label{CSA-continuum}

In this section we consider a probabilistic model for sequential deposition 
of points in a bounded domain of Euclidean space. 
This model is a continuous analogue of  the lattice CSA known as 
 monomer filling with nearest neighbour cooperative effects (see~\cite{Evans1}
 and references therein).
 This lattice model describes a random sequential 
 deposition of particles on the lattice, 
 where  only one particle can be allocated at a site. The probability of allocating a particle 
  at an empty site  is proportional to the allocation rate, which 
  depends on the number of existing particles in a neighbourhood  
 of the site. 
  For example,  the model on the one-dimensional lattice is specified by parameters 
  $c_i,$ $i=1,2,3$.
  Namely, 
  a particle is placed at an empty site $k$ with the rate $c_i$, if the total number 
  of existing  particles at its nearest neighbours $k-1$ and $k+1$ is equal to $i$.
  In the continuous analogue of the lattice CSA model  
  particles (points) are placed sequentially at random into a bounded 
  region of $\R^d$ as follows.
  Given the current configuration of points, the probability of the event 
 that a particle is placed at location $x$ 
 is proportional to  a  parameter $\beta_k\geq 0$ (called the growth rate), where 
 $k$ is the number of existing  points within a given distance $R$ of $x$.
 This  continuous CSA  model was proposed in~\cite{SV1}
 and was further studied as a model for time series of spatial locations in~\cite{Pen-SV1}
 and~\cite{Pen-SV2}.

In the rest of this section we formally define the model and 
discuss statistical inference for the model parameters.

\subsection{The model definition}
\label{CSA-def}

Start with some notations.
Let $\N$ be the set of all positive integers and 
 $\Z_{+}=\N\cup \{0\}$.  Let $\R=(-\infty, \infty)$ and 
$\R_{+}=[0,\infty)$. 
By ${\bf 1}_{A}$ we denote
 the  indicator  function of a set or an event  $A$. 
We assume  that all random variables under consideration are defined on a certain probability 
 space with the probability measure $\P$. The expectation with respect to $\P$ will be denoted by $\E$.

 Given points $x,y\in \R^d$ we denote by  $\lVert x-y\rVert$ the Euclidean distance 
 between $x$ and $y$.
Given a positive number $R$ points $x,y\in \R^d$ are called neighbours, 
 if  $\lVert x-y\rVert\leq R$,
 in which case we write $x\sim y$.
Given a finite set (ordered or unordered) 
$\X$ of points in 
$\R^d$,  define 
\begin{equation}
\label{nn}
\nn(x,\X)=\sum_{y\in \X}{\bf 1}_{\{\lVert x-y\rVert\leq R\}}\quad\text{for}\quad
x\in \R^d,
\end{equation}
in other  words, $\nn(x,\X)$ is 
 the number of neighbours of $x$  in the set
$\X$. By definition $\nn(x,\emptyset)=0.$

The continuous CSA model with the interaction radius $R>0$ 
and parameters  $(\beta_k\geq 0,\, k\in\Z_{+})$ is the probabilistic 
model for 
random sequential deposition of points in $\R^d$ defined as follows.  
Consider a compact convex set $D\subset \R^d$ 
(called the target region, or the observation window).
 Let   $X(k)=(X_1,...,X_k),\, k\geq 0,$ be the sequence 
 of locations of first $k$ points allocated in $D$ according to the model.
By definition, $X(0)=\emptyset$.
Given that $X(k)=x(k)=(x_1,...,x_k)$ for $k\geq 0$
the conditional probability density function of the next point $X_{k+1}$ is
\begin{equation}
\label{dens0}
\psi_{k+1}(x|x(k))=\frac{\beta_{\nn(x, x(k))}}{\int_{D}\beta_{\nn(y, x(k))}dy},\,\, 
x\in D.
\end{equation}
The joint density of $(X_1,...,X_{\ell}),\, \ell\geq 1$ is given by 
\begin{equation}
\label{dens}
p_{\ell,\beta,D}
(x_1,\ldots,x_\ell)=\prod_{k=1}^\ell \psi_k(x_k|x(k-1))=\prod_{k=1}^{\ell}\frac{\beta_{\nn(x_k, x(k-1))}}
{\int_{D}\beta_{\nn(u, x(k-1))}du},\quad x_i\in D,\, i=1,\ldots,\ell.
\end{equation}

The described CSA can be regarded as a discrete 
time spatial birth process with birth rates $\beta_{\nn(x,x(k))},\, x\in D$, provided 
that the state of the process at time $k$  is $x(k)=(x_1,\ldots,x_{k})$.

Alternatively, the  model
can be described  as the  acceptance-rejection sampling described below.
Namely, let $(Y_i,\, i\geq 1)$ be a sequence of independent 
random points uniformly distributed in $D$, 
 and construct another sequence 
of random points by accepting  each
 point of the original  sequence
  with a certain probability to  be described below, 
otherwise rejecting that point.  
 Let $X(k)=(X_1,\ldots, X_{k})$ be the sequence
of  $k=k_n$  accepted points from the finite  sequence  $Y_i,\, i=1,\ldots,n$.
By definition $X(0)=\emptyset$.
The point  $Y_{n+1}$  is accepted with probability   
$\beta_{\nn(Y_{n+1}, X(k))}/C,$ where $C$ is an arbitrary constant
such that $\max_{0\leq i \leq k}\beta_i\leq C$. 
Regardless of the particular choice of $C$, 
 the next accepted point $X_{k+1}$
has the probability density
$\psi_{k+1}(x|X(k))$ given by~\eqref{dens0} 
 In other words, given the sequence $X(\ell)$ of the
 first $\ell$ accepted points, 
the next accepted point $X_{\ell+1}$
is sampled  from a distribution  which is specified by 
the 
probability density proportional to the function  $\beta_{\nn(x, X(\ell))},\, x\in D$
(the value of $C$ influences only the number of discarded points $Y_i$
until the next acceptance).

\begin{remark}
{\rm 
The defined above continuous CSA model was introduced in~\cite{SV1}.
 In that paper    the asymptotic structure 
 of the model point pattern was studied under the assumption 
 that the sequence  $(\beta_n>0,\, n\geq 0)$ 
converges to a positive  limit as  $n\to \infty$.
This  assumption can be interpreted as if ``adsorption rates
stabilize at saturation''.  
}
\end{remark}

\begin{remark}
{\rm 
A special case of the model, when $\beta_i=0$ for $i\geq 1$, is called 
random sequential adsorption (RSA) model.
RSA is also  known  as the car parking model.
In the latter cars are modelled by balls of radius $R$. Cars 
sequentially  arrive  to the target region $D$ and choose
a location to park at random.
 A new arrival is discarded with probability $1$, 
if it overlaps any of previously parked cars. 
Otherwise it  is parked (accepted)  with probability $1$.
}
\end{remark}

\subsection{CSA as a model for time series of spatial locations}
\label{TS}

It has been noted by physicists (e.g.~\cite{Evans1}) that CSA 
model can be used for modelling sequential point patterns in disciplines  such as geophysics, biology and  ecology in   situations, where  a  data set is  presented by a 
sequential  point pattern, i.e., a collection of spatial events
which appear sequentially. 
In other words, 
CSA  can be used as an approximation of spatial spread dynamics in various applications.
This idea was explored in~\cite{Pen-SV1} and~\cite{Pen-SV2}, where 
the continuous CSA
was  regarded as a model for time series of spatial locations,
which is  flexible for modelling both regular and clustered  point patterns
 (e.g. see Figure~\ref{fig240}). Note that models for clustered point 
 patterns are of a particular interest in spatial statistics.
\begin{figure}[htbp]
\centering
\begin{tabular}{cc}
    \includegraphics[width=2.0in, height=2.0in, angle=270]{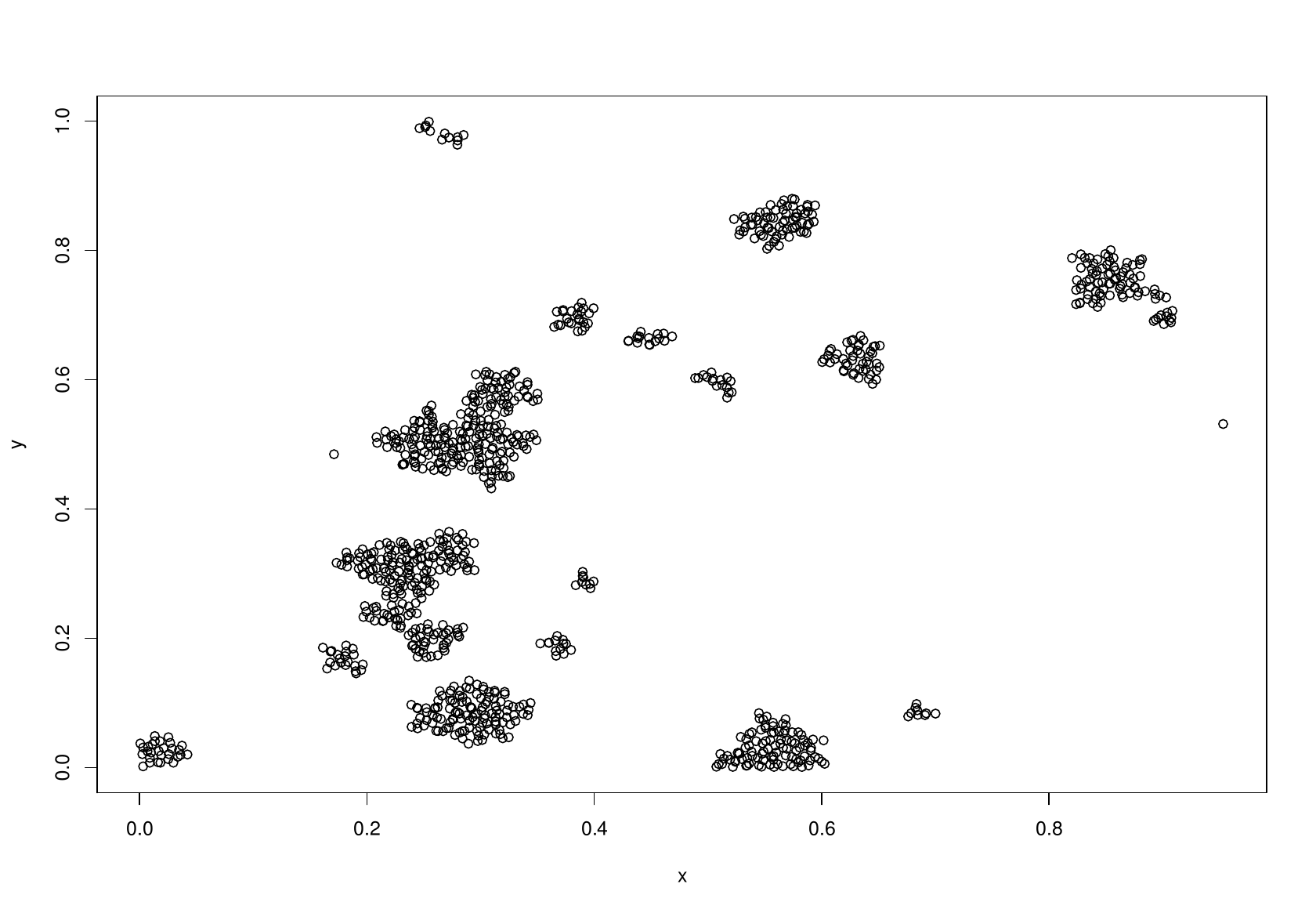}&
  \includegraphics[width=2.0in, height=2.0in, angle=270]{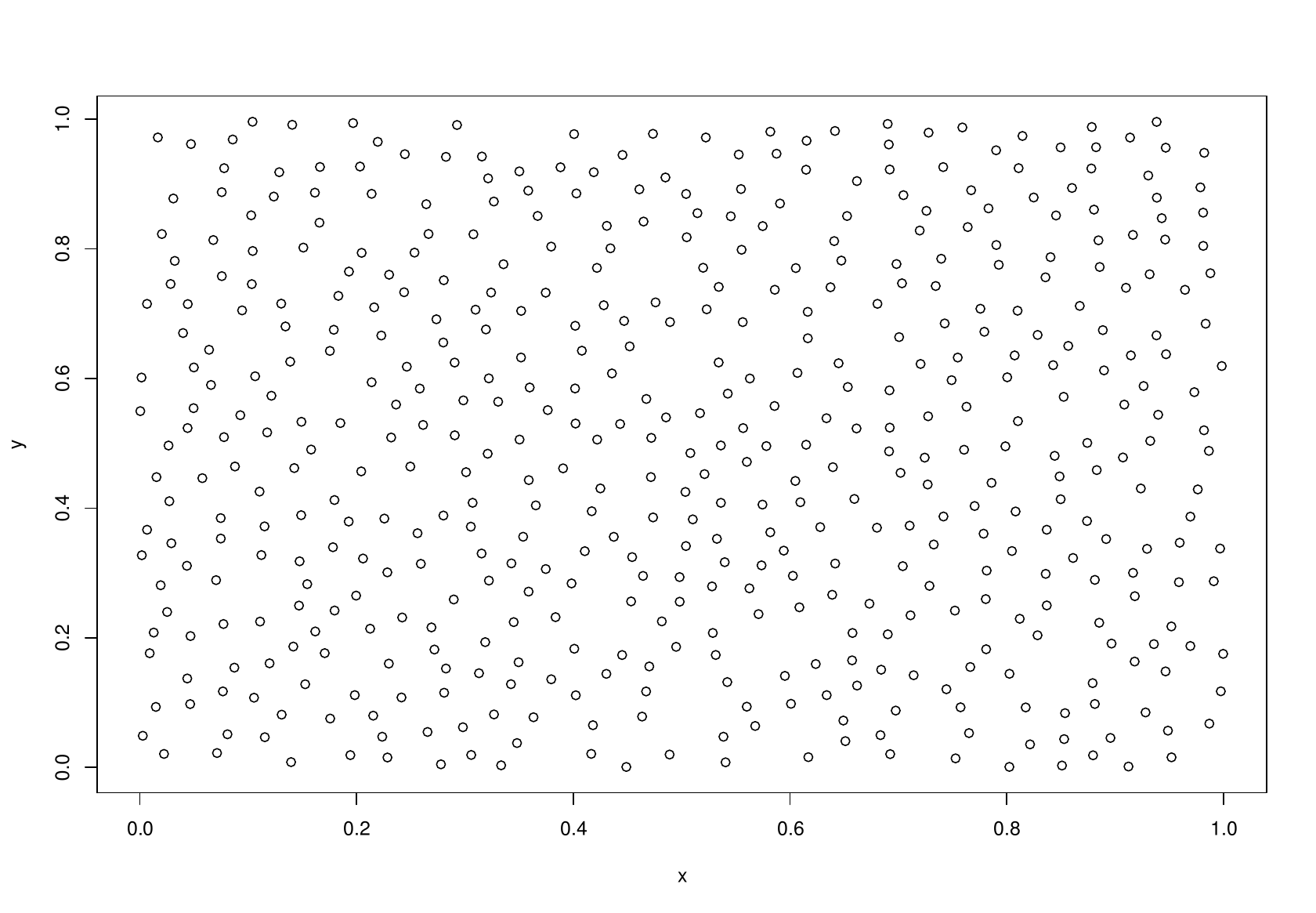}
\end{tabular}
\caption{{\footnotesize CSA simulations in $D=[0,1]^2$.
 Left: $1000$ points, $R=0.01$, $(\beta_i)_{i \geq 0}
= (1,1000,10000,0,0, \ldots)$
Right: $500$ points, $R=0.03$, $(\beta_i)_{i \geq 0}=(1,0,0,\ldots)$,
i.e. this is RSA model.
}}
\label{fig240}
\end{figure}

Fitting a parametric statistical model to real-life data requires 
estimation of the model parameters.
Statistical inference based on maximum likelihood 
estimation was developed   in~\cite{Pen-SV1} and~\cite{Pen-SV2}
 for CSA with a finite number of parameters $\beta$', which means 
 that  there exists a  fixed positive integer  $N$
such that 
\begin{equation}
\label{NN}
\beta_k >0\quad\text{for}\quad 0 \leq k \leq N\quad\text{and}\quad
\beta_k=0\quad\text{for}\quad k\geq N+1.
\end{equation}
It is easy to see  that
the density (\ref{dens0})
(and hence (\ref{dens})) is unaffected by multiplication
of all parameters $\beta_k$ by a constant.
 Therefore, for identifiability of the model we also assume that   $\beta_0=1$, so that the model is parameterised by  parameters $\beta=(\beta_1,\ldots, \beta_N)$.

In general, both number $N$ and  the interaction radius $R$ 
are also regarded as the model parameters and 
have to be estimated. 
The parameter $N$ can be easily estimated by the maximal number 
of neighbours that a point has in an observed pattern (formal definition is given below).
In contrast, estimation of the interaction radius in the general case is 
an open problem. 
If $N=0$, i.e. in the case of RSA model, the natural estimator 
of the interaction radius is the minimal distance between an observed  point 
and those points in the pattern that arrived earlier.
In what follows, we assume that the interaction radius $R$ is assumed to be
 a fixed and  known constant.

\subsection{MLE for CSA}
\label{mle-csa}

In this section we explain how to develop 
statistical inference for parameters of the CSA model 
by using the method of  maximum likelihood estimation (MLE).

Start with considering  the model likelihood.
Recall that ${\bf 1}_{A}$ stands for  the indicator of a set $A$.
Given  an observation
$x(\ell)=(x_1,...,x_{\ell})\in D^{\ell}$, where $\ell\geq 2$,
define 
\begin{align}
\label{Gajdef}
\Gamma_{j,k}&=
\int\limits_{D}{\bf 1}_{\{u:\nn(u,x(k))=j\}}du\quad
\text{for}\quad 0\leq j\leq k\leq\ell-1,\,  j=0,...,N,\\
\label{Gajdef1}
\Gamma_{j,k}&=0\quad\text{for}\quad k<j,\, j=1,...,N,\quad\text{and}
\quad \Gamma_{0,0}=|D|.
 \end{align}
Observe  that 
$$\int_{D} \beta_{\nn(u,x(k))} du =
\sum\limits_{j=0}^{k}\beta_{j}\Gamma_{j,k}
= \Gamma_{0,k} + \sum\limits_{j=1}^{N}\beta_{j}\Gamma_{j,k}.
$$
Further, define 
\bea
\label{tkdef}
t_{k,\ell}=t_k(x(\ell)) :=
\sum\limits_{i =1}^\ell
 {\bf 1}_{\{\nn(x_i,x(i-1))=k\}}\quad
 \text{for}\quad k=0,\ldots,N.
\eea
In terms of statistics~\eqref{Gajdef} and~\eqref{tkdef} we have the following 
equation for the model likelihood
\begin{equation}
\label{like}
p_{ \ell, \beta,D}(x_1,\ldots,x_\ell)  =  
\frac{\prod_{j=1}^{N}\beta_{j}^{t_{j}(x(\ell))}}
{\prod_{k=1}^{\ell}\int_{D}\beta_{\nn(u, x(k-1))}du} 
=\frac{\prod_{j=1}^{N}\beta_{j}^{t_{j,\ell}}}
{\prod_{k=1}^{\ell}\left(\Gamma_{0,k-1} +
\sum_{j=1}^{N}\beta_j\Gamma_{j,k-1}\right)}.
\end{equation}
The log likelihood function is therefore given by  
 \begin{equation}
 \begin{split}
 L_D(x(\ell), \beta) : =& \log(p_{\ell, \beta,D}(x_1,\ldots,x_{\ell}))\\
 &=  \sum_{j=1}^{N}t_{j, \ell}\log(\beta_j)
  - \sum\limits_{k=1}^{\ell}\log\left(\Gamma_{0,k-1} +
\sum\limits_{j=1}^{N}\beta_j\Gamma_{j,k-1}\right).
\end{split}
\end{equation}

\begin{remark}
\label{R2}
{\rm 
Note that since $X(0)=\emptyset$
the first point $X_1$ is uniformly distributed in $D$, and, also, 
 the first term  in the sum $\sum_{k=1}^{\ell}...$
in the preceding display is just a constant $\log(|D|)$.
}
\end{remark}

Maximum likelihood estimators (MLEs)  are defined as usual, i.e. 
as maximizers of the model
likelihood and can be found by solving MLE equations obtained 
by equating to zero the log-likelihood derivatives.

If $N$ is unknown, then  it has to be estimated before estimating $\beta$'s.
Given $X(\ell)=(X_1,...X_{\ell})$, where, as before, we assume that $\ell\geq 2$, 
 we estimate $N$ by 
\begin{equation}
\label{Nestim} 
\widehat{N}=\widehat{N}(X(\ell)) :=\max\limits_{X_i\in X(\ell)}\nn(X_i, X(i-1))
= \max \{j:t_{j,\ell} >0\}.
\end{equation}
It is easy to see  $\widehat{N}$ is the maximum likelihood estimator of the  parameter $N$.

Having estimated $N$ by  $\widehat{N}$
we have that 
\begin{eqnarray*}
L_D(X(\ell), \beta) 
  =   \sum_{j=1}^{\widehat{N}}t_{j, \ell}\log(\beta_j)
  - \sum\limits_{k=2}^{\ell}\log\left(\Gamma_{0,k-1} +
\sum\limits_{j=1}^{\widehat{N}}\beta_j\Gamma_{j,k-1}\right).
\end{eqnarray*}
The maximum likelihood estimator
$
\widehat{\beta}(X(\ell))=(\widehat{\beta}_{1},\ldots,
\widehat{\beta}_{\widehat{N}},0,0,\ldots)$ 
of the true parameter vector 
$(\beta_1^{\left(0\right)},\ldots,
\beta_{N}^{(0)})$
is defined as  the
maximizer of the log likelihood $L_D(X(\ell), \beta)$ over
vectors of the form $(\beta_1,\ldots,\beta_{\widehat{N}},0,0\ldots)$.
Since 
$L_D(X(\ell), \beta)$ 
depends smoothly on $(\beta_1,\ldots,\beta_{\widehat{N}})$; 
 the 
$(\widehat{\beta}_{1},\ldots,\widehat{\beta}_{\widehat{N}})$  
 is a solution to the  system of MLE equations
\begin{equation}
\label{ml0}
\frac{\partial L_D(X(\ell), \beta)}{\partial \beta_j}=
0,\quad j=1,\ldots,\widehat{N},
\end{equation}
or, equivalently, 
\begin{equation}
\label{ml}
t_{j, \ell} - \sum\limits_{k=2}^{\ell}
 \frac{\beta_j\Gamma_{j, k-1}}{\Gamma_{0,k-1} +
\sum_{i=1}^{\widehat{N}}\beta_i\Gamma_{i,k-1}}=
0,\quad j=1,\ldots,\widehat{N},
\end{equation}
where note that the sum $\sum_{k=2}$ in the right hand  side starts from $k=2$ 
because of Remark~\ref{R2}.
It is obvious that $\widehat{N} \leq N$ almost surely.
If $\widehat{N}< N$,  then 
$t_{j, \ell}=0$ for $N'+1\leq j \leq N$. 
It is also  possible that 
$t_{j, \ell}=0$ for some $j<\widehat{N}$. 
Therefore, 
if an observed point pattern is not a ``typical'' model pattern, then 
we might not have sufficient information to estimate 
the full set of parameters.
However, if $\widehat{N}=N$
and all $t-$statistics 
are positive,  then  there exists a unique  
positive solution $(\widehat{\beta}_{1},\ldots,\widehat{\beta}_{N})$ of the likelihood equations.
It turns out that  these conditions hold 
 with probability 
tending to $1$,
as the amount of observed information increases in a certain  natural sense
(to be explained). 

\begin{example}
\label{exam1}
{\rm  Suppose that  $N=1$, i.e., there is one unknown parameter $\beta=\beta_1$.
Assume that an observed sequence of points $x(\ell)=(x_1,\ldots,x_{\ell}),\, \ell\geq 2$ is
such that  $\widehat{N}=1$ and the statistic $t_{1,\ell}>0$ (the number of points having $1$ neighbour).
There is a a single MLE equation in this case,that is 
\begin{equation}
\label{t1}
t_{1,\ell}- \sum\limits_{k=2}^{\ell}
\frac{\beta\Gamma_{1,k}}{\Gamma_{0,k} + \beta \Gamma_{1, k}}=0.
\end{equation}
If   $0<t_{1,\ell}<\ell-1$, then  
existence and uniqueness of the solution of the MLE equation follows from the fact 
that  the left hand side of equation~\eqref{t1} is a strictly monotonic 
function of $\beta$.
If $t_{1,\ell}=\ell-1$, then this suggests that the observed pattern 
is generated by the model obtained by  setting formally $\widehat{\beta}=\infty$.
In the  corresponding  limit model a new point is allocated with probability one 
in the neighbourhood of existing points subject to the constraint that it 
cannot have more than one neighbour among those points.
}
\end{example}

\begin{remark}
\label{MC-for-CSA}
{\rm 
It should be noted that  the model   log-likelihood, and, hence, MLEs for the CSA model,
 can be effectively computed numerically by the classical Monte-Carlo
 (required to compute $\Gamma-$statistics that are given 
by integrals).
We refer to~\cite{Pen-SV1} for numerical examples.
}
\end{remark}

\subsection{Asymptotic properties  of MLE estimators}
 
In this section we briefly discuss 
asymptotic properties of MLE estimators for CSA
in the situation, when the amount of observed information increases.
In the classic case of i.i.d. observations this limit regime 
means that the number of observations tends to infinity.
The analogue of this in spatial statistics 
is known as the  increasing domain asymptotic framework, which 
means the number of  observed 
points  tends to infinity, as 
 the target region (observation window) 
 expands to the whole space.
 We describe below  this limit regime in relation to 
 CSA model with a finite number 
of non-zero parameters $\beta$, where there are natural restrictions 
on the number of observed points in a given target region.

Let $D_1$ denote   the unit cube centred at the origin
(or any compact convex set $D_1\subset  \R^d$).
Consider a sequence of rescaled domains $D_m=m^{1/d}D_1,\, m\in \Z_{+}$.
Given  $m$, consider the CSA process as the acceptance/rejection 
sampling with target region $D=D_m$.
Denote by $A_m(n)$ the (random) number of points accepted  out
of the first $n$ incoming points. 
If  $N<\infty$, then  no  particle can be placed at 
any location $x$ with more than $N$ existing particles within distance $R$ 
of $x$. 
Therefore, the limit 
$\theta_m=\lim_{n \to \infty} A_m(n)$ exists almost surely, and is
a finite random variable. 
Further, there exists a finite limit 
$\lim_{m\to \infty}\theta_m =:\theta_\infty$,
known as the jamming density (see~\cite{Pen-SV1} for more details of this quantity).

The increasing domain asymptotic framework in the case of the CSA model can be now defined as follows.
\begin{assumption}
\label{A}
The number  $\ell_m$ of observed points in the domain $D_m$
is  asymptotically linear in $m$ with coefficient
 below the jamming density $\theta_\infty=\theta_{\infty}(R, \beta_1,...,\beta_N)$, 
 that is 
$$
\lim_{m \to \infty}  \left( \frac{\ell_m}{m}
\right)=\mu\in (0,\theta_\infty).
$$
\end{assumption}
Note
 that  the above  limit is known as the {\em thermodynamic limit} in
the statistical physics literature.

{\it  Assume in the rest  of the section that Assumption~\ref{A} holds}.
It turns out that under this assumption 
 the log-likelihood derivatives in the case of CSA model
behave asymptotically very similar to those in the i.i.d. case. 
 This fact allows to combine methods of 
the classic  MLE theory  for  i.i.d. observations
 (e.g., see \cite{Lehman}) with the modern theory 
for sums of locally determined functionals (to be explained)
to establish consistency and asymptotic normality of MLE estimators 
for CSA model under assumption~\ref{A}.

Given parameters  $N$ and $\beta=(\beta_1,\ldots,\beta_N)$
consider  the probability measure  $\P_{m, \beta}$  on finite point sequences of
 length $\ell_m$ in $D_m$ specified by the probability density $p_{\ell,\beta, D}$
 with $\ell=\ell_m$ and $D = D_m$.
Denote $\beta^{\left(0\right)}=\displaystyle{
\left(\beta_1^{\left(0\right)},\ldots,\beta_N^{\left(0\right)}\right)}$ the true 
parameter and let $\P_m^{\left(0\right)}:=\P_{m, \beta^{\left(0\right)}}$.
Given observation $X(\ell_m)\in D_m$ define the maximum likelihood estimators
$$
\widehat{\beta}(m)=\widehat{\beta}(X(\ell_m))=(\widehat{\beta}_{1,m},\ldots,\widehat{\beta}_{N,m})
$$
of parameters $\beta^{\left(0\right)}=(\beta_1^{\left(0\right)},\ldots,
\beta_{N}^{\left(0\right)})$  as those values that maximize
the log likelihood function
$L_m(\beta) :=  L_{D_m}(X(\ell_m), \beta),$
as explained in Section~\ref{mle-csa}.

It was shown in~\cite[Corollary 2.1]{Pen-SV1}
that 
$$\P_m^{\left(0\right)}\left\{\frac{p_{\ell_m, \beta^{\left(0\right)}, D_m}
(X_1,\ldots,X_{\ell_m})}
{p_{\ell_m, \beta, D_m}(X_1,\ldots,X_{\ell_m})} > 1 \right\}\to 1, 
~~~ \asm,
$$
for $\beta=(\beta_1,\ldots,\beta_N)\neq    \beta^{\left(0\right)}=
(\beta_1^{\left(0\right)},\ldots,\beta_N^{\left(0\right)})$. 
This  result is analogous to the well known result
for the case of  i.i.d.observations
(e.g., see~\cite[Chapter, Theorem 2.1]{Lehman})
and
justifies why statistical inference for the CSA model can be based 
on MLE.  
Furthermore, it was  shown in~\cite[Theorem 2.2 and Lemma 5.2]{Pen-SV1}
that
with $\P_m^{\left(0\right)}-$probability tending to  $1$,
as $m\to \infty$, 
the estimator $\widehat{N}$ is equal to $N$ 
and 
there exists a unique positive solution
$(\widehat{\beta}_{1,m},\ldots,\widehat{\beta}_{N,m})$
 of the system of MLEs, such that 
$\widehat{\beta}_{i,m}\to\beta_i^{\left(0\right)}$ for $i=1,\ldots,N,$ 
in $\P_m^{\left(0\right)}-$ probability, as $m\to \infty$.

Asymptotic normality of MLE estimator $\widehat{\beta}$ was established 
in~\cite{Pen-SV2}.
Specifically, it was shown that
$\sqrt{m}(\widehat{\beta}(m)-\beta^{\left(0\right)})\to
\xi(\mu)$ in $\P_m^{\left(0\right)}-$distribution, as $m\to \infty,$
 where
$\xi(\mu)$ 
is the Gaussian vector with zero mean and covariance matrix
given by the inverse matrix of 
the model limit information matrix.
The latter is defined
 as  the limit (in $\P_m^{\left(0\right)}-$probability, as $m\to \infty$)
 of the observed information matrix 
$-\frac{1}{m}\left(\frac{\partial^2L_m(\beta)}
{\partial \beta_i \partial \beta_j}\right)_{i,j=1}^N$
evaluated at the true parameter $\beta^{\left(0\right)}$.
A detailed study of the structure of the information matrix can be 
found~\cite{Pen-SV2} to which we refer for further details.

Usefulness of showing  asymptotic normality of a parameter
 estimator  provides asymptotic justification for  creating
 confidence intervals based on the normal distribution,
when a sufficiently large number of points is observed in a 
sufficiently large region relative to the interaction radius $R$
(see~\cite{Pen-SV2}  for examples of creation of  confidence intervals).

\subsection{MLE for CSA and the theory of locally determined functionals}
\label{local}

The asymptotic analysis of MLEs is based on the fact 
that the model statistics have special structure. 
Namely, these statistics are  sums of so called  locally determined
functionals over a finite set of points.
Below we briefly explain the idea.

Start with some definitions.
A set of points $\X \subset \R^d$ is called  locally finite,
if its intersection with any ball of a finite radius 
consists of a finite number of points. 
A locally determined functional with a given range $r>0$
is a measurable real-valued function $\xi(Y, \X)$
defined for all pairs $(Y, \X)$, where  $Y\in \R^d$ and 
$\X \subset \R^d$ is locally finite,
with the property that 
$\xi(Y, \X)$ is determined only by those  points of
$\X$ that are within distance $r$ of $Y$.
A locally determined functional $\xi(Y, \X)$ is translation invariant 
if $\xi(Y, \X)=\xi(Y+a, \X+a)$ for any $a\in \R^d$.
For example, 
the functional 
\begin{equation}
\label{xi}
\xi(x,\X):={\bf 1}_{\{\nn(x,\X)=j\}},
\end{equation}
 where the  quantity $\nn(x,\X)$ is defined by~\eqref{nn}, 
is a bounded, translation-invariant, locally determined functional with the 
range equal to 
the interaction radius $R$.

Given a locally determined functional $\xi$,  
the corresponding additive functional $H^\xi$  on
finite sequences $X(\ell) = (X_1,\ldots,X_{\ell}) \in (\R^d)^\ell$ is defined 
as follows
\begin{equation}
\label{loc_func2}
H^\xi(X(\ell))=\sum\limits_{i=1}^{\ell} \xi(X_i,X(i-1)).
\end{equation}
Observe now that both $\Gamma-$statistics~\eqref{Gajdef}
and $t-$statistics~\eqref{tkdef} are sums of locally determined functionals.
Indeed, in the case of $t-$statistics
 \begin{equation}
\label{t}
t_{j, \ell}=\sum\limits_{i=1}^{\ell}
{\bf 1}_{\{\nn(X_i,X(i-1))=j\}},\quad 0\leq j\leq N,
\end{equation} 
we have that the  statistic $t_{j,\ell}$  is the additive functional 
corresponding to the  locally determined functional~\eqref{xi}.
Representation of $\Gamma-$statistics as sums of locally determined functionals 
is more technically involved and we refer to~\cite{Pen-SV1} for further details.

The general  limit theory developed in~\cite{PenYuk}) for additive functionals~\eqref{loc_func2} 
implies that, under Assumption~\ref{A},
there exist strictly positive and continuous in $\mu$ functions 
$(\rho_j\left(\mu, \beta\right),\,1 \leq j \leq N$ and 
$\gamma_{j}\left(\mu, \beta\right),\, 1 \leq j \leq N$, 
 such that 
 \bea
\frac{t_{j,\ell_m}}{m}\to \rho_j\left(\mu, \beta\right)
\quad\text{and}\quad 
\frac{\Gamma_{j,\ell_m}}{m}\to \gamma_{j}\left(\mu, \beta\right),
\, ~~~ j=0,\ldots,N,
\eea 
in  $\P_m-$probability, as $m\to \infty$, 
and that are related by the system of  equations
\begin{equation*}
\label{t_g}
\rho_j\left(\mu, \beta\right)=\int\limits_{0}^{\mu}
\frac{\beta_j\gamma_{j}\left(\lambda, \beta\right)}{\gamma_{0}\left(\lambda, \beta\right)+
\sum_{i=1}^N\beta_i \gamma_{i}\left(\lambda, \beta\right)}d\lambda,\, 
~~~ j=1,\ldots,N.
\end{equation*}
Further, let 
$\gamma_{j}^{\left(0\right)}(\lambda):=
\gamma_{j}(\lambda, \beta^{\left(0\right)})$ and
$\rho_j^{\left(0\right)}(\mu):=\rho_j(\mu, \beta^{\left(0\right)})$
for $j=1,\ldots,N$. 
Given $\mu \in (0, \theta^{(0)})$, where $\theta^{(0)}=
\theta_\infty(R, \beta_1^{(0)}, \ldots,\beta_N^{(0)})$,
the vector of  true  parameters  
$\beta^{(0)} = (\beta_1^{(0)}, \ldots,\beta_N^{(0)})$ 
 is a solution of the system of equations 
\begin{equation}
\label{mle_lim}
\rho_j^{\left(0\right)}(\mu)-\int\limits_{0}^{\mu}
\frac{\beta_j\gamma_{j}^{\left(0\right)}(\lambda)}
{\gamma_{0}^{\left(0\right)}(\lambda)+
\sum_{i=1}^N\beta_i \gamma_{i}^{\left(0\right)}(\lambda)}d\lambda=0,\, ~~~ 
j=1,\ldots,N,
\end{equation}
which is the infinite-volume limit of the MLE~\eqref{ml}. 

\begin{example}
\label{exam2}
{\rm 
Assume, as in Example~\ref{exam1}, that $N=1$.
In this case the true  single parameter $\beta^{\left(0\right)}=\beta_1^{\left(0\right)}$
is the unique solution of the limit equation 
$$\rho_1^{\left(0\right)}(\mu)-\int\limits_{0}^{\mu}
\frac{\beta\gamma_{1}^{\left(0\right)}(\lambda)}
{\gamma_{0}^{\left(0\right)}(\lambda)+
\beta\gamma_{1}^{\left(0\right)}(\lambda)}d\lambda=0.
$$
Existence and uniqueness of the solution of 
 this equation follows, similarly to the case 
of MLE equation in Example~\ref{exam1}, from 
 monotonicity of the integrand in the parameter $\beta$.

}
\end{example}

\section{CSA growth model on graphs}
\label{graph-model}

In this section we consider a probabilistic model
for random sequential deposition of particles at vertices of a graph.
The model can be regarded as a discrete version 
of the continuous CSA model considered in the previous section.
Recall that in the continuous model 
the probability of the event  that a particle is placed at location $x$ 
 is proportional to  the growth rate $\beta_k\geq 0$, where 
 $k$ is the number of existing  points in the neighbourhood   of $x$.  
In the discrete model we assume that the growth rate at a vertex $v$
is equal to $e^{\alpha k+\beta m}$, where $k$ is the number of existing particles 
at the vertex $v$, $m$ is the total number of existing particles 
in vertices adjacent to $v$, and $\alpha, \beta$ are given constants.
Thus, in general, we distinguish particles at the vertex itself 
and in its neighbours.

This   growth model can be interpreted as an interacting urn model
with a graph based log-linear interaction.
Similarly to urn models, we are interested in the long term behaviour of the growth process.
In particular, we would like to establish in which cases all vertices receive infinitely 
many particles and in which cases all but finitely particles are allocated at a certain subset
of vertices (e.g. at a single vertex).

\subsection{The model definition}
\label{growth-def}

The model set up is as follows.
Consider  an arbitrary finite  graph $G=(V,E)$ with the set of vertices $V$ and the set 
of edge $E$. If vertices $v,u\in V$ are adjacent, 
we call them {\it neighbours} and write $v\sim u$.
If vertices $v$ and $u$ are not adjacent, then we write $v\nsim u$.
By definition, vertex is not a neighbour of itself, i.e. $v\nsim v$.

The growth process with parameters $(\alpha, \beta)\in\R^2$
 on the graph $G=(V, E)$
is a discrete time Markov chain $X(n)=(X_v(n),\, v\in V)\in \Z_{+}^V$ with  the following 
 transition probabilities  
\begin{equation}
\label{trans}
\P(X(n+1)=X(n)+\be_v|X(n)=\bx)=\frac{e^{\alpha x_v+\beta\sum_{u\sim v}x_u}}
{\Gamma(\bx)},\quad \bx=(x_w,\, w\in V)\in \Z_{+}^V,
\end{equation}
where
$\Gamma(\bx)=\sum_{v\in V}e^{\alpha x_v+\beta\sum_{u\sim v}x_u}$
and $\be_v\in\R^V$ is the $v$-th unit vector, i.e. 
 the vector, the  $v-$th coordinate of which 
is equal to $1$,  and all other coordinates are zero.  

\begin{figure}[H]
  \centering
  \includegraphics[scale=0.9,trim={5.2cm 20cm 5cm 5cm},clip]{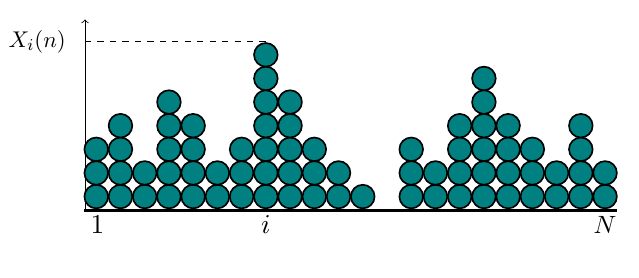}
  \vspace*{-1mm}
  \caption{\small{Deposition model on a linear graph $\{1\sim...\sim N\}$}.}
  \label{fig1}
\end{figure}

The growth process $X(n)=(X_v(n),\, v\in V)$ 
describes a random sequential allocation of particles  on the  graph, where 
  $X_v(n)$ is interpreted as the  number of particles at vertex $v$  at time $n$. 
If  $\beta=0$, then the structure of the underlying graph is irrelevant, and 
the growth  process  is  a special case of the  generalised P\'olya urn 
(GPU) model. 
Recall that GPU model is a model for random sequential allocation of particles at
a finite number of urns, in which  a particle is allocated at an urn $v$ with $x_v$ existing 
particles with probability proportional to the growth rate $f(x_v)$,
 where $f$ is a given positive  function.
The growth process with parameter $\beta=0$ 
is the GPU model with the exponential growth rate $f(k)=e^{\alpha k}$.
If $\beta\neq 0$, then the growth  process 
  can be regarded as an  interacting  urn model obtained 
  by adding graph based interaction. 
Observe that the growth rate 
 $e^{\alpha x_v+\beta\sum_{u\sim v}x_u}$ (i.e. the function  determining the 
 allocation probability~\eqref{trans})  is 
a monotonically increasing  function of the parameter $\beta$.
Therefore, if   $\beta>0$, then  interaction 
between components of the growth process is cooperative  in the sense that 
particles in a neighbourhood of a vertex accelerate the growth rate 
at the vertex.
In contrast, if $\beta<0$, then the interaction between  process's components 
is competitive in the sense that particles in a neighbourhood of a vertex 
 the growth rate slow down the growth at the vertex.

\begin{remark}
{\rm   
The growth process on arbitrary 
graph was introduced in~\cite{MenSV20}.
The growth process  a single parameter  $\lambda:=\alpha=\beta\in \R$
on a cycle   graph 
was introduced and studied in~\cite{SV10a}
(Recall that a cycle graph  with $N\geq 2$ vertices is the 
graph  $G=\{1\sim 2 \sim ... \sim N-1\sim N\sim 1\}$).
 The limit cases of the growth process on a  cycle graph 
   obtained by  setting 
 $\lambda=\infty$ and $\lambda=-\infty$
 (with convention $\infty\cdot 0=0$)  were studied in~\cite{SV10b}
 (see an open problem at the end of Section~\ref{localisation} for more details).
A  version of the growth process on a  cycle graph, where 
the  parameter $\lambda$ depends 
on a vertex (i.e. $\alpha_v=\beta_v=\lambda_v>0$, $v\in V$), 
was studied in~\cite{CMSV}.
}
\end{remark}

\subsection{Localisation in the growth model  with attractive interaction}
\label{localisation}

Recall some known results for  GPU models.
Consider a GPU model  with the growth rate
determined by a function $f$, as described in the preceding  section.
Assume that $f$ is  such that $\sum_{k=1}^{\infty}\frac{1}{f(k)}<\infty$.
It is known that in this case,  
with probability one, all but a finite number of particles are
 allocated at a single random urn.
In other words, the allocation process {\it localises} at a single urn.
This result immediately implies the eventual localisation at a single vertex 
for the growth process with parameters $\alpha>0$ and  $\beta=0$
(as it is just a special case of the aforementioned GPU model).

It was shown in~\cite{MenSV20} that a similar localisation effect 
occurs in  the growth process with attractive 
interaction  introduced  by a positive parameter $\beta$. 
It turns out that in this case  the growth process localises at special subsets
 of  vertices  rather than at a single vertex. 

Recall some definitions from  graph theory necessary to state the result.
  Let  $G=(V, E)$ be a finite graph.
Then, given a subset  of vertices $V'\subseteq V$  the corresponding induced subgraph is a  graph  $G'=(V', E')$ whose edge set $E'$ consists of all of the edges in $E$ that have both endpoints in $V'$.
 The induced subgraph $G'$ is also known as a subgraph induced by  the set of 
 vertices $V'$.
A complete induced subgraph is called a clique, and 
a maximal clique is a clique  that is  not an induced subgraph of another clique. 

The localisation result in~\cite[Theorem 1]{MenSV20} is as follows.
Consider  the growth process $X(n)=(X_v(n),\, v\in V)\in \Z_{+}^V$ 
with parameters $(\alpha, \beta)$ on  a finite connected graph $G=(V, E)$, 
and let $0<\alpha\leq \beta$. Then 
 for every initial state $X(0)\in\Z_{+}^V$ with probability one there exists a random maximal clique  with a vertex set  $U\subseteq V$ such that 
$$\lim_{n\to \infty} X_v(n)=\infty
 \text{ if and only if } v\in U,\, \text{ and }
\lim_{n\to \infty}\frac{X_{v}(n)}{X_{u}(n)}=e^{C_{vu}},\,\text{for} \,v,u\in U,$$
where
$$C_{vu}=
\lambda\lim_{n\to \infty}\sum_{w\in V}X_{w}(n)[{\bf 1}_{\{w\sim v, w\nsim u\}}-
{\bf 1}_{\{w\sim u, w\nsim v\}}],\, \text{if } 0<\lambda:=\alpha=\beta,$$
and $C_{vu}=0$,  if $0<\alpha<\beta$.

The above localisation  effect 
was first shown in~\cite[Theorem 3]{SV10a} 
(see also~\cite[Theorem 1]{CMSV})
in the case  when  $\alpha=\beta>0$  and the underlying  graph is a cyclic graph.
In the special case of the cyclic graph any  clique is just a pair of neighbouring vertices
(assuming that the graph consists of at least three vertices).

The proof  is based on the following key fact.
Namely, given an {\it arbitrary} initial configuration 
the process localises at one of the graph's clique with probability 
that  is {\it bounded away from zero}.
This implies that with probability one  the process eventually localises 
at one of the graph's  cliques (the final clique).

Conditioned that particles are allocated only at vertices of a given clique, 
the numbers of allocated particles at these vertices 
grow according to a  multinomial model in the case when $\alpha=\beta$.
The allocation probabilities of this multinomial model are 
determined by the configuration of existing particles in the neighbourhood 
of the clique which remains unchanged since the start of the localisation.
In other words, the multinomial model is determined by 
the  limit  quantities $C_{vu}$ that  depend on the state of the   process  at the time moment, when localisation starts at the final clique.
In the case $\alpha<\beta$ these quantities irrelevant, and the numbers 
of particles at vertices of the final clique grow   in the same  way 
as in the case of the complete graph described in the example below.

\begin{example}[Complete graph]
{\rm 
Consider the growth process  $X(n)=(X_1(n),...,X_m(n))$ with parameters $0<\alpha<\beta$ 
 on  a complete graph  with $m\geq 2$ vertices labeled by  $1, ...,m$.
 Let 
$Z_i(n)=X_i(n)-X_m(n),\, i=1,..., m-1$.  By~\cite[Lemma 3.3]{MenSV20}
 the process of differences
$Z(n)=(Z_1(n),..., Z_{m-1}(n))\in \Z^{m-1}$ is  an irreducible  positive recurrent Markov chain. 
Positive recurrence 
was shown by applying Foster's criterion (e.g. see~\cite[Theorem 2.6.4]{MPW}) with the 
Lyapunov function given by  
$$
g(\bz)=\sum\limits_{i=1}^{m-1}z_i^2,\, \bz=(z_1,...,z_{m-1})\in \Z^{m-1}.
$$
It should be noted  that exactly  the same fact (see~\cite[]{SV10a})
 is true for the process of differences 
in the GPU model with the growth rate $f(k)=e^{-\lambda k},\, k\in\Z_{+}$, where 
$\lambda>0$.
}
\end{example}

\begin{remark}
{\rm   Localisation also occurs 
 if $0<\beta<\alpha$. In this  case  
 with probability one the growth process 
  localises at a single vertex, which is 
  similar to the GPU model with the growth  rate give by the 
  function  $f(k)=e^{\alpha k}$.
 }
 \end{remark}

\paragraph{Open problem: repulsive interaction}

The long term behaviour of the  growth process in the case 
when $\lambda:=\alpha=\beta<0$
is  largely unknown. In this case 
the interaction between the process's components is repulsive, which 
greatly complicates the study of the process's behaviour.
Below we briefly describe some  known results and state an open problem.

Consider the growth process 
on a cycle  graph $G=\{1\sim 2\sim ... \sim m\sim 1\}$
(i.e. with $m$ vertices labelled by $1,...,m$) with parameters $\lambda:=\alpha=\beta<0$.
Start with a special semi-deterministic case of the process obtained by 
formally setting $\lambda=-\infty$.  Namely, 
in this case given the process's 
state $\bx=(x_1,...,x_m)$ (where $x_i$ denotes, as before, 
the number of particles at vertex $i$) 
a next particle is allocated to a vertex $x_i$ for which the  
quantity $u_i=x_{i-1}+x_i+x_{i+1}$ (with convention $1-1=m$ and $m+1=1$)
is minimal. If there is more than one such vertex, then one of them 
is chosen at random. 
In this semi-deterministic model
if the number of vertices of the graph is $m=4$,
then,   given any initial configuration 
particles will be placed with probability one only at a pair of non-adjacent vertices 
(i.e. either at vertices  $\{1,3\}$, or at vertices $\{2,4\}$). 
Moreover, after a finite number of steps 
the numbers of particles at the vertices of the 
final  pair differ by no more than $1$ and equal to each other 
every other step.
Similar but much more complicated limit behaviour is observed in the case
of the cycle graph with  arbitrary number of vertices. 
We refer to~\cite{SV10b} for further details, where complete classification 
of the long term behaviour of the model with $\lambda=-\infty$ on the cycle graph 
is given. 

In the case $-\infty<\lambda<0$ only a partial result is known in the case of the cycle 
graph with even number of vertices.  Namely, it is shown in~\cite{SV10a}
that if initially there are no particles, then 
with a positive probability particles can be allocated either 
only at even, or only at odd
vertices. The complete classification of the limit behaviour of the model in 
the case $-\infty<\lambda<0$ is an open problem.

\section{The reversible model}
\label{revers}

\subsection{The model definition}

In this section we consider  a probabilistic model which is a
version of the growth process
(defined  in Section~\ref{growth-def}) obtained 
by allowing deposited particles to depart from the graph.

It is convenient to define the  model in terms of a  continuous time
Markov chain (CTMC).
The model set up is as follows.
 Let, as  before, $G=(V,E)$ be a finite graph 
with the set of vertices $V$ and the set of edges $E$.
Recall that $\be_v\in \R^V$  is 
 the vector, the  $v-$th coordinate of which 
is equal to $1$,  and all other coordinates are zero.  
Consider a CTMC $X(t)=(X_v(t),\, v\in V)\in\Z_{+}^V$ and the transition rates 
$r_{\alpha, \beta}(\bx,\by),\, \bx, \by\in \Z_{+}^V$ given by 
\begin{equation}
\label{rates}
r_{\alpha, \beta}(\bx,\by)=\begin{cases}
e^{\alpha x_v+\beta\sum_{u\sim v}x_u},& \mbox{ for }\by=\bx+\be_v \mbox{ and }\bx=(x_v,\, v\in V),\\
1,& 
\mbox{ for } \by=\bx-\be_v \mbox{ and } \bx=(x_v,\, v\in V): x_v>0,\\
0,& \text{otherwise}.
\end{cases}
\end{equation}
where $\alpha, \beta\in\R$  are given constants.

Note that if the death rate was zero, then the CTMC $X(t)$
would be  a continuous time version of the growth 
process.
If~$\beta=0$, then CTMC $X(t)$ is a collection of independent 
 processes labelled by the vertices of graph $G$. In this case 
a component of the Markov chain is a continuous time birth-and-death process
on $\Z_{+}$ (or, equivalently, a nearest neighbour 
random walk on $\Z_{+}$) that evolves as follows.
Given state $k\in\Z_{+}$ it jumps to $k+1$ with the rate $e^{\alpha k}$ and jumps 
to $k-1$ (if $k>0$) with the unit rate. 
Such a process is a special case of the  birth-and-death (BD) process
on the set on non-negative integers. 
The long term behaviour of an integer valued BD process is well known.
 Namely, given a
set of transition characteristics one can, in principle, determine whether the 
corresponding Markov chain MC is (positive) recurrent or (explosive, if the time 
is continuous) transient, and compute various other characteristics of the process.
In particular, the general theory implies 
the following  long term behaviour  of the CTMC $X(t)$ in the independent case
(i.e. when $\beta=0$).
\begin{itemize}
    \item If $\alpha<0$, then each component of $X(t)$ is positive recurrent, and, hence, 
    $X(t)$ is positive recurrent.
    \item If $\alpha=0$, then each component of $X(t)$ is  a reflected symmetric simple random walk on $\Z_{+}$, which is null recurrent. The CTMC $X(t)$ is null recurrent
    if the number of components is either $1$, or $2$, and it is transient
    if the number of components is $3$ or more.
    \item If $\alpha>0$, then each component of $X(t)$ is explosive transient, and, hence, 
    the CTMC $X(t)$ is explosive transient.
\end{itemize}

If $\beta\neq 0$,
then the CTMC $X(t)$ can be regarded as a system of 
  interacting birth-and-death   processes that are 
  labelled by vertices of the graph and evolve
 subject to interaction determined by the parameter $\beta$.
Note that  the presence of interaction can  significantly affect the collective 
behaviour of a system  
and produce effects that might be of interest in modelling the evolution of
multicomponent  random systems.
The  model provides a flexible and mathematically tractable 
choice for  modelling various types of interaction.
For example, if $\beta>0$, then the interaction between 
components is  cooperative meaning that a positive component
accelerates growth of its neighbours. 
In the case  $\beta<0$ the interaction is competitive, since
components suppress growth of each other.

The CTMC $\xi(t)$ was introduced in~\cite{SV15}, where its long term behaviour
was studied in some special cases. In full generality 
the long term behaviour of process was studied in~\cite{Janson}. Main results 
and research methods of these works are  explained below.

\subsection{Long term behaviour of the model}
\label{N=infty}


In this section we review the main results of~\cite{Janson} 
concerning the long term behaviour of the  countable CTMC $X(t)$. 
Recall countable  CTMCs  can be  non-explosive transient, explosive transient, 
null recurrent and positive recurrent. 
It turns out that all these limit behaviours are realised in the case of the CTMC
$X(t)$ depending on parameters $\alpha, \beta$ and on 
the structure of the underlying 
graph. In addition, 
the long term behaviour of the Markov chain is 
largely determined by a relationship between parameters $\alpha, \beta$ 
and the largest eigenvalue of  the graph.

Let us give  some definitions. 
Let $\A=(a_{vu},\, v,u\in V)$ be  the  adjacency matrix of the graph $G=(V,E)$, i.e. $\A$ is a symmetric matrix  such that $a_{vu}=a_{uv}=0$ for $u\nsim v$ and  $a_{vu}=a_{uv}=1$ for $u\sim v$.
Since $\A$ is symmetric, its eigenvalues are real. Denote
them by $\lambda_1(G)\geq \lambda_2(G)\geq\dots\geq  \lambda_n(G)$, so that
$\lambda_1:=\lambda_1(G)$ is the largest eigenvalue (Perron-Frobenius eigenvalue). 
It is well known that   $\gl_1(G)>0$ (except the  case when the graph has no edges).
Not that in terms of the adjacency matrix $\A$ the birth rate in~\eqref{rates1} can be written as follows 
\begin{equation}
\label{b-rate}
e^{\alpha x_v+\beta\sum\limits_{u\sim v}x_u}=e^{\alpha x_v+\beta(\A\bx)_v}.
\end{equation}
Further, an \emph{independent set} of vertices in a graph $G$ is a set of the
vertices such that  no two vertices in the set are adjacent.
The \emph{independence number} $\kappa= \kappa(G)$ of a graph $G$
is  the cardinality of the largest independent  set of vertices.

 Theorem~\ref{main0} below is an extract of~\cite[Theorem 2.3]{Janson}
 that distinguishes between recurrence and transience.
\begin{theorem} 
\label{main0}
Suppose that the graph $G$ is connected 
and $\beta\neq 0$.
\begin{enumerate}
\item The CTMC $X(t)$ is recurrent in the following two cases
\begin{enumerate}
\item   $\ga<0$ and $\ga+\gb\gl_1(G)<0$;
\item   $\ga=0$, $\gb<0$ and $\kk(G)\le2$.
\end{enumerate}
\item 
The CTMC $X(t)$ is transient in all the cases  below
\begin{enumerate}
\item  $\ga>0$; 
\item $\ga=0$ and  $\gb>0$;
\item $\ga=0$, $\gb<0$ and $\kk(G)\ge3$;
\item $\ga<0$ and $\ga+\gb\gl_1(G)\ge0$.
\end{enumerate}
\end{enumerate}
\end{theorem}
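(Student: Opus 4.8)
The plan is to exploit the fact that $X(t)$ is \emph{reversible}. The first step is to check that the measure
\[
\pi(\bx)=\exp\!\Bigl(\tfrac{\alpha}{2}\sum_{v\in V}x_v(x_v-1)+\beta\sum_{\{u,v\}\in E}x_ux_v\Bigr)=\exp\!\Bigl(\tfrac12\,\bx^{\top}(\alpha I+\beta\A)\bx-\tfrac{\alpha}{2}\sum_{v\in V}x_v\Bigr),\quad\bx\in\Z_+^V,
\]
satisfies detailed balance: the only non-trivial transitions are $\bx\leftrightarrow\bx+\be_v$, and $\pi(\bx)\,r_{\alpha,\beta}(\bx,\bx+\be_v)=\pi(\bx+\be_v)\,r_{\alpha,\beta}(\bx+\be_v,\bx)$ reduces to $\pi(\bx+\be_v)/\pi(\bx)=e^{\alpha x_v+\beta(\A\bx)_v}$, which is immediate from the formula. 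The chain is irreducible on $\Z_+^V$ (a birth is always available, a death wherever $x_v>0$), so its recurrence/transience coincides with that of its embedded jump chain, and by reversibility with that of the electrical network on $\Z_+^V$ with conductances $c(\bx,\bx+\be_v)=\pi(\bx)\,r_{\alpha,\beta}(\bx,\bx+\be_v)=\pi(\bx+\be_v)$; recurrence is equivalent to $R_{\mathrm{eff}}(\bo\leftrightarrow\infty)=\infty$. (When $\alpha>0$, or $\alpha=0<\beta$, the CTMC may be explosive and is then transient by definition, consistently with the network; in the recurrent cases I will check non-explosion, which is automatic once the rates are bounded, i.e.\ once $\alpha\le0$ and $\beta\le0$.)

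For \emph{recurrence}, in case 1(a) I would show $\sum_{\bx}\pi(\bx)<\infty$, which with irreducibility and non-explosion gives positive recurrence. If $\beta\ge0$, the largest eigenvalue of $\alpha I+\beta\A$ is $\alpha+\beta\lambda_1(G)<0$, so $\bx^{\top}(\alpha I+\beta\A)\bx\le-\delta\norm{\bx}^2$ and $\pi$ has Gaussian-type decay; if $\beta<0$, then $\alpha<0$ already forces $\alpha+\beta\lambda_1(G)<0$, and on the orthant $\bx^{\top}\A\bx=2\sum_{\{u,v\}\in E}x_ux_v\ge0$, so $\bx^{\top}(\alpha I+\beta\A)\bx\le\alpha\norm{\bx}^2$ and $\pi$ is again summable (for $\beta\le0$ the rates are bounded; for $\beta>0$ one instead notes $\sum_{\bx}\pi(\bx)(\sum_v e^{\alpha x_v+\beta(\A\bx)_v}+\#\{v:x_v>0\})\le2|V|\sum_{\bx}\pi(\bx)<\infty$, so the jump chain is positive recurrent and the CTMC does not explode). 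Case 1(b) is the substantial one. With $\alpha=0$, $\beta<0$, $\kappa(G)\le2$ the rates are bounded, so it remains to show the network is recurrent, which I would get from the Nash--Williams criterion with the cutsets $\Pi_n=\{\{\bx,\bx+\be_v\}:\sum_w x_w=n\}$: since $\sum_{e\in\Pi_n}c(e)=\sum_{\by:\,\sum_w y_w=n+1}\#\{v:y_v>0\}\,\pi(\by)\le|V|\,P_{n+1}$ with $P_m:=\sum_{\sum_w y_w=m}\pi(\by)$, it suffices that $P_m=O(m)$. To get this I would split $P_m$ according to the support $T$ of $\by$: a maximal matching $M$ of $G[T]$ leaves an unmatched set that is independent in $G[T]$, hence has at most $\kappa(G)\le2$ vertices; freezing those (they carry $O(m)$ mass in $O(m)$ ways) and using $\pi(\by)\le\prod_{\{a,b\}\in M}e^{\beta y_ay_b}$ together with $\sum_{a,b\ge1}e^{\beta ab}<\infty$ shows the matched coordinates contribute only a bounded constant, so $P_m\le 2^{|V|}(m+1)C_\beta=O(m)$. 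Then $\sum_n(\sum_{e\in\Pi_n}c(e))^{-1}\ge c\sum_n n^{-1}=\infty$, so the network, and hence the CTMC, is recurrent.

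For \emph{transience} in cases 2(a)--(c) I would exhibit either a single path to infinity of finite resistance or a transient subnetwork, and invoke the monotonicity principle (a network containing a transient subnetwork is itself transient). For 2(a), $\alpha>0$: along the ray $\{k\be_v\}$ the conductances are $\pi((k+1)\be_v)=e^{\alpha k(k+1)/2}$, so the path resistance $\sum_k e^{-\alpha k(k+1)/2}$ is finite. For 2(b), $\alpha=0$, $\beta>0$: along a staircase near the diagonal of an edge $\{u,v\}\in E$ one has $\pi(k(\be_u+\be_v))=e^{\beta k^2}$, and the resistances decay like $e^{-\beta k^2}$. For 2(c), $\alpha=0$, $\beta<0$, $\kappa(G)\ge3$: take an independent set $U$ with $|U|=3$; for $\bx$ supported in $U$ every edge of $G$ is incident to a vertex outside $U$, so the interaction term in $\pi$ vanishes and the subnetwork on $\Z_+^U$ is exactly $\Z_+^3$ with unit conductances --- transient --- hence so is the whole network.

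Case 2(d) is the other delicate point. Here $\alpha<0$ and $\alpha+\beta\lambda_1(G)\ge0$ force $\beta>0$; let $\bfu$ be a Perron--Frobenius eigenvector of $\A$ with all coordinates positive, so $\A\bfu=\lambda_1(G)\bfu$. Build a lattice path $\bo=\bz^{0},\bz^{1},\dots$ with $\bz^{k+1}=\bz^{k}+\be_{v_k}$ by a standard low-discrepancy (apportionment) rule, so that $\bz^{k}=c_k\bfu+\bw^{k}$ with $c_k=k/\norm{\bfu}_1$ and $\norm{\bw^{k}}_\infty=O(1)$; note $\sum_w z^{k}_w=k$, hence $\pi(\bz^{k})=\exp(\tfrac12(\bz^{k})^{\top}(\alpha I+\beta\A)\bz^{k}+\tfrac{|\alpha|}{2}k)$. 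If $\alpha+\beta\lambda_1(G)>0$ the quadratic term equals $(\alpha+\beta\lambda_1(G))\norm{\bfu}^2c_k^2+O(k)$, of order $k^2$, so the resistances decay super-exponentially; if $\alpha+\beta\lambda_1(G)=0$ then $\bfu\in\ker(\alpha I+\beta\A)$ while $\alpha I+\beta\A\preceq0$ (its remaining eigenvalues are $\alpha+\beta\lambda_i<\alpha+\beta\lambda_1(G)=0$, as $\lambda_1(G)$ is simple for a connected graph), so $(\bz^{k})^{\top}(\alpha I+\beta\A)\bz^{k}=(\bw^{k})^{\top}(\alpha I+\beta\A)\bw^{k}=O(1)$ and the linear term $\tfrac{|\alpha|}{2}k$ --- where the sign $\alpha<0$ is essential --- already gives $\pi(\bz^{k})\ge e^{ck}$; either way $\sum_k\pi(\bz^{k})^{-1}<\infty$, the path has finite resistance, and the network is transient. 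I expect the main obstacle to be case 1(b): the transient cases reduce to choosing the right ray or subnetwork and 1(a) is a short estimate, but proving $P_m=O(m)$ --- that the configurations not exponentially penalised under $\pi$ are essentially those supported on an independent set, which is exactly why $\kappa(G)\le2$ is the threshold --- requires the maximal-matching reduction and is the genuinely graph-theoretic part of the argument.
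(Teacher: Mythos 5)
Your proposal is correct and follows essentially the same route the paper outlines for Theorem~\ref{main0}: exploit reversibility with respect to the measure $e^{W(\bx)}$, turn the jump chain into an electric network on $\Z_{+}^V$ whose edge conductances equal the invariant measure, and decide recurrence/transience via effective resistance (Nash--Williams cutsets and summability of the measure for the recurrent cases, finite-resistance paths along a coordinate ray, an edge diagonal, or the Perron--Frobenius direction, and the transient $\Z_+^3$ subnetwork on an independent set for the transient cases), which is exactly the argument of the cited work~\cite{Janson} that the survey sketches in Section~\ref{rev}.
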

The proof of the above result is  greatly facilitated by the fact that the 
CTMC $X(t)$ is reversible, which in turn, allows to apply 
the method of electric networks. This is explained in the next section.

\subsection{Reversibility of the model}
\label{rev}

Let ${\bf I}$ be the unit  $V\times V$ matrix, let $\be\in\R^V$ be the vector all components of which are equal to $1$, 
 and  let $(\cdot, \cdot)$ denote the Euclidean scalar product.
Define functions
\begin{align}
\label{Q}
Q(\bx)&=-\frac{1}{2}((\alpha{\bf I}+\beta\A)\bx,\bx)=
-\frac{\alpha }{2}\sum_{v}x_v^2-\beta \sum\limits_{v\sim u}x_vx_u,\quad
\bx=(x_v,\, v\in V)\in\R^V\\
 \label{S}
S(\bx)&=(\bx, \be)=
    \sum_vx_v, \quad \bx=(x_v,\, v\in V)\in\R^V,\\
    \label{U}
W(\bx)&=-Q(\bx)-\frac{\alpha}{2}S(\bx),
 \quad \bx=(x_v,\, v\in V)\in\R^V.
\end{align}
We claim that the CTMC $X(t)$ is reversible with 
respect to the invariant measure
\begin{equation}
\label{inv-meas}
\begin{split}
e^{W(\bx)}&=e^{-Q(\bx)-\frac{\alpha}{2}S(\bx)}=
e^{\frac{\alpha }{2}\sum\limits_{u}x_u(x_u-1)+
\beta \sum\limits_{w\sim u}x_wx_u}, \, \bx\in \Z_{+}^V,
\end{split}
\end{equation}
Indeed, given $v\in V$ and $\bx\in \Z_{+}^V$ we have  that 
\begin{align*}
-Q(\bx)-\frac{\alpha}{2}S(\bx)
+\alpha x_v+\beta(\A\bx)_v&=-Q(\bx+\be_v)-\frac{\alpha}{2}
S(\bx+\be_v).
\end{align*}
Therefore, 
\begin{align*}
e^{W(\bx)}e^{\alpha x_v+\beta\sum\limits_{u\sim v}x_u}&=
e^{-Q(\bx)-\frac{\alpha}{2}S(\bx)}e^{\alpha x_v+\beta(\A\bx)_v}
 =e^{-Q(\bx+\be_v)-\frac{\alpha}{2}S(\bx+\be_v)}
=e^{W\left(\bx+\be_v\right)}
\end{align*}
which, recalling~\eqref{rates}, means that the detailed balance equation 
\begin{equation}
\label{balance1}
e^{W(\bx)}r_{\alpha,\beta}(\bx,\by)=e^{W\left(\by\right)}
r_{\alpha, \beta}(\by,\bx)
\quad\text{for}\quad \bx, \by\in \Z_{+}^V,
\end{equation}
holds for CTMC $X(t)$ and the invariant measure~\eqref{inv-meas}.

\begin{remark}
{\rm 
It should be noted that 
rewriting equation~\eqref{balance2} as follows
\begin{equation}
\label{balance2}
e^{\alpha x_v}e^{W(\bx)}
=e^{-\beta\sum_{u\sim v}x_u}e^{W\left(\bx+\be_v\right)},
\end{equation}
shows  that the measure $e^{W(\bx)},\, \bx\in \Z_{+}^V$ 
is also invariant for the 
CTMC $Y(t)=(Y_v(t),\, v\in V)$ with the transition rates 
$\hat{r}_{\alpha, \beta}(\bx,\by),\, \bx, \by\in \Z_{+}^V$ given by 
\begin{equation}
\label{rates1}
\hat{r}_{\alpha, \beta}(\bx,\by)=\begin{cases}
e^{\alpha x_v},& \mbox{ for }\by=\bx+\be_v,\mbox{ and } \bx=(x_v,\, v\in V),\\
e^{-\beta\sum_{u\sim v}x_u},& 
\mbox{ for } \by=\bx-\be_v \mbox{ and } \bx=(x_v,\, v\in V): x_v>0,\\
0,& \text{otherwise}.
\end{cases}
\end{equation}
It is shown in~\cite{Janson}  that the long term behaviour of CTMC 
$\widehat{Y}(t)$ is largely the same as the long term behaviour of the CTMC $X(t)$.
}
\end{remark}

Reversibility of a Markov chain 
allows to apply the method of electric networks 
for determining  whether the Markov chain  is recurrent or transient.
The idea is that recurrence/transience 
of the reversible  Markov chain can be established by 
analysing the so called effective resistance of a certain electric
network. 
In the case of the CTMC $X(t)$ the corresponding 
 electric network is defined as follows.

  The  CTMC $X(t)$ can be interpreted as a nearest neighbour 
random walk on the lattice graph $\Z_{+}^V$, i.e. 
the graph with vertices 
$\bx=(x_v,\, v\in V: x_v\in \Z_{+})$, where 
vertices $\bx,\by\in \Z_{+}^V$ are connected by an edge
if the Euclidean distance $\Vert \bx-\by\Vert=1$
(i.e. $\bx$ and $\by$ are nearest neighbours on the lattice).

The electric network on this graph 
is obtained by assigning conductance ($\text{resistance}^{-1}$)
to each edge, which is done as follows.
Given $\bx\in \Z_{+}^V$  and $v\in V$
assign the conductance $e^{W(\bx)}$, where 
the function $U$ is defined in~\eqref{U}  for each
 edge $(\bx-\be_v, \bx)$ with $\bx_v\geq 1$.
 In other words, the edge  conductance 
is equal to the value of the invariant measure of the CTMC $X(t)$ at the state 
 $\bx$. 
The edge $({\bf 0}, \be_v)\, v\in V$, where ${\bf 0}$ is the origin, 
is assigned the unit conductance.
Resistance of an edge is defined the reciprocal of conductance.

By the general method, 
the CTMC $X(t)$ is recurrent (transient), if 
the so called effective resistance of the described electric network 
on the graph $\Z_{+}^V$ is infinite (finite).
The effective resistance in this case if defined, loosely speaking, 
 as the resistance between 
the origin ${\bf 0}$ and  ``infinitite'' vertex (see~\cite{Grimmet} for details).
It turns out that in the case of the CTMC $X(t)$ the effective resistance of the electric network is relatively easy to estimate (see~\cite{Janson} for further details).

Further, the detailed balance equation can be solved 
for the model providing 
analytically tractable equation for the invariant measure. 
This allows to distinguish between 
null and   positive recurrence.

\subsection{Recurrent cases}

The fact that  the invariant measure of the Markov chain is known
allows  to distinguish between the null and positive recurrence
in the recurrent cases of Theorem~\ref{main0}. This is done by analysing 
whether  the invariant measure can be normalised to define 
the stationary distribution.
A direct computation gives that in the case 1(a) of the theorem, i.e. 
when \/ $\ga<0$ and $\ga+\gb\gl_1(G)<0$, the invariant measure
is summable, that is 
\begin{equation}
\label{W-Z}
Z_{\alpha, \beta, G}:=\sum_{\bx\in \Z_{+}^V}
e^{W(\bx)}<\infty.
\end{equation}
Recalling that an irreducible CTMC is positive recurrent if and
only if it has a stationary  distribution and is non-explosive. 
Since a recurrent CTMC is non-explosive we immediately obtain that 
if $\ga<0$ and $\ga+\gb\gl_1(G)<0$, then 
  $X(t)$ is positive
recurrent with the stationary distribution given by
\begin{equation}
\label{measure}
\mu_{\alpha, \beta, G}(\bx)=\frac{1}{Z_{\alpha, \beta, G}}e^{W(\bx)}
\quad\text{for}\quad
 \bx=(x_v,\, v\in V)\in \Z_{+}^V.
\end{equation}
In contrast, the Markov chain is null recurrent in the case 1(b)
of the theorem.
Indeed, if $\alpha=0$, then 
$$Z_{\alpha, \beta, G}=\sum_{\bx\in \Z_{+}^V}
e^{W(\bx)}\geq \sum_{k=0}^{\infty}e^{W(k\be_v)}=\sum_{k=0}^{\infty}1=\infty,$$
where $v\in V$ is any given vertex, i.e. the stationary distribution 
does not exist in this case (regardless of other characteristics of the model).
Therefore, in the case $\alpha=0$ the Markov chain cannot be positive 
recurrent, and, hence, in the recurrent case when 
$\gb<0$ and $\kk(G)\le2$ the Markov chain is  just null recurrent.

\begin{remark}
{\rm In addition, note in~\cite{SV15} positive recurrence of the CTMC $X(t)$
was shown by using the Foster's criterion for positive recurrence in the case when 
$\alpha<0, \beta>0$ and $\alpha+\beta\max_{v\in V} d_v(G)<0$, 
where $d_v(G)$ is the number of neighbours of the vertex $v\in V$, i.e. 
the number of vertices that are  adjacent to $v$ (the degree of the vertex $v$).
The criterion was applied with the Lyapunov function 
$f(\bx)=(Q(\bx),\bx)$, where $Q$ is the quadratic function 
defined in~\eqref{Q}. 
}
\end{remark}

\subsection{Transient cases}

In the case of a transient CTMC it is natural to ask whether 
the CTMC is explosive. 
Note first that  in the transient  case 
2(c)  of Theorem~\ref{main0} 
(i.e. when $\alpha=0, \beta<0$  and $\kappa(G)\geq 3$)
the CTMC $X(t)$ is non-explosive. 
Indeed,  it is easy to see that if  $\alpha=0, \beta<0$, then
the transition rates are uniformly bounded by $1$, and, hence, the process 
cannot be explosive.
In addition, note that in general  there are only two possible long term behaviours of the Markov
 chain if~$\alpha=0$ and~$\beta<0$.
Namely, by the results above 
CTMC $\xi(t)$ is either non-explosive transient or null recurrent, and 
this  depends only on the independence number of the graph~$G$. 

Further, it was shown in~\cite[Lemma 6.3]{Janson}
that in the transient case $\ga<0$  and $\gb=\frac{-\alpha}{\gl_1(G)}$
the CTMC $X(t)$  is not explosive.
Although the transition rates are unbounded in this transient case, the process tends 
to infinity by staying in a domain where the rates are bounded, which prevents the
explosion. This effect is rather easy to understand in the case 
when  the  graph consists  of just two adjacent vertices
(see~\cite[Theorems 1 and 4]{SV15}), when the process is a special 
case of non-homogeneous random walks in the quarter plane.
In this case, if $\alpha<0$ and $\beta=-\alpha$, then the process 
is pushed away from the boundaries (where the rates can be arbitrarily large)  towards the diagonal of the quarter plane, where the  rates are bounded
(see Figure~\ref{Fig3}).  
The same effect of non-explosion takes place in the general case, although 
its  proof  is not straightforward (see~\cite[Lemma 6.3]{Janson}).
It should be noted that the diagonal here is the line determined by the vector $(1,1)^T$, which is the eigenvector of the adjacency matrix $\A=\begin{bmatrix}0&1\\1&0\end{bmatrix}$
of the graph $G=\{1\sim 2\}$, that corresponds to the principle  eigenvalue $1$.
If $\alpha<0$ and $\beta=-\alpha$, then the rates around the diagonal 
are unbounded and the process becomes explosive (see an open problem below concerning
explosion in the general case). 

Further, recall that $d_v(G)$ denotes the number of neighbours
of a vertex $v$.
It was shown in~\cite[Theorems 1 and 2]{SV15} that 
the Markov chain is explosive in the following cases:
\begin{itemize}
\item[(i)] $\alpha>0, \, \beta<0$;
\item[(ii)] $\ga+\gb\min\limits_{v\in V}d_v(G)>0$
including subcases 
\begin{itemize}
\item $\ga>0$ and $\gb\ge0$; 
\item  $\alpha=0$ and $\beta>0$;
\item  $\alpha<0$ and $\beta>\frac{|\alpha|}{\min_{v\in V}d_v(G)}$.
\end{itemize}
\end{itemize}
In particular, in the cases  2(a), 2(b) of Theorem~\ref{main0}
 the CTMC $X(t)$  is explosive.

\paragraph{Open problem: explosion}
Recall that in  general 
$\min_{v\in V}d_v(G) \leq \gl_1(G)$, i.e. the maximal eigenvalue of a graph $G$
is not less than the minimal vertex degree of the graph.
The above results
concerning explosions  do not include the transient  case when 
$$\alpha<0\quad\text{and}\quad 
-\frac{\alpha}{\gl_1(G)}<\beta\leq -\frac{\alpha}{\min_{v\in V}d_v(G)},$$
which remains unsolved.
It was conjectured in~\cite{Janson} that in this case the CTMC $X(t)$ is explosive.
The conjecture is based on the intuition explained in the two-dimensional case above. Namely, that in this transient case the process escapes to infinity by ``following'' 
 the line $\{s{\bf v}_1: s\in R\}$, where ${\bf v}_1$ is the eigenvector corresponding 
 to the largest eigenvalue $\gl_1(G)$, and the transition rates grow exponentially along this 
 line.

\subsection{Phase transition}
\label{Phase}

There is  a phase transition phenomenon 
in the long term behaviour of CTMC $X(t)$ in the case $\alpha<0$.
Indeed, in this case we have the following classification of the process's behaviour 
\begin{itemize}
\item Let $\alpha<0$.
\begin{itemize}
\item[(i)]  If $\beta<-\frac{\alpha}{\gl_1(G)}$, then $X(t)$ is positive recurrent.
This includes the case when  $\beta=0$, 
i.e. when  $X(t)$  is formed
by a collection of independent 
positive recurrent reflected random walks on $\Z_{+}$, and is thus positive recurrent.
\item[(ii)]  If $\beta=-\frac{\alpha}{\gl_1(G)}$, then $X(t)$ is non-explosive 
transient.
\item[(iii)] If $-\frac{\alpha}{\gl_1(G)}<\beta<-\frac{\alpha}{\min_{v\in V}d_v(G)}$, then 
$X(t)$ is transient. It is conjectured that $X(t)$ is explosive transient (see the open problem in the preceding section).
\item[(iv)] If $\beta>-\frac{\alpha}{\min_{v\in V}d_v(G)}$, then 
$X(t)$ is explosive transient.
\end{itemize}
\end{itemize}
If $\beta<0$, then interaction in this case is competitive, as 
neighbours obstruct the growth of each other. The competition implies 
positive recurrence of the process
(which is positive recurrent even without interaction).
Suppose now that  $\beta$ is positive. One could intuitively expect that if $\beta$ is not large, i.e. the cooperative interaction is not strong, so that 
the Markov chain is still positive recurrent.
On the other hand, if $\beta>0$ is sufficiently large,  then  the intuition suggests that 
the Markov chain might become transient.
It turns out, that $\beta_{cr}=\frac{|\alpha|}{\lambda_1(G)}$ is the critical 
value at which the phase transition occurs.
Precisely at this value of $\beta$ the Markov chain is non-explosive 
transient. Moreover, it is conjectured that given $\alpha<0$ the corresponding  critical 
value $\beta_{cr}$ is the only value 
of the parameter $\beta$ when the Markov chain is non-explosive transient. 

\subsection{Examples}

The largest eigenvalue of the adjacent 
matrix of the underlying graph plays essential role 
is determining the long term behaviour of the CTMC $X(t)$.
Estimation of this eigenvalue, and more generally, 
graph eigenvalues, is a very important problem in many applications.
There are well known bounds for  the largest eigenvalue $\gl_1$,
although its explicit  value is known only 
in some special cases.
Below we give several simple examples where 
the largest eigenvalue $\gl_1$ can be computed explicitly,
which allows to rewrite the
conditions of Theorem~\ref{main0} in the case $\ga<0$ in more explicit form.

\begin{example}
{\rm 
 If $G=(V, E)$ is with constant vertex
 degrees $d$, then $\gl_1(G)=\min_{v\in V}d_v(G)=d$.
 For example, $d=1$ for  a graph consisting of two adjacent
 vertices, and $d=2$ in for a cycle graph with at least three vertices.
In this case the Markov chain is positive recurrent if and
 only if $\ga<0$ and $\alpha+\beta d<0$. 
If $\alpha<0$ and 
$\alpha+\beta d=0$, then
the Markov chain is non-explosive transient, and if 
$\alpha+\beta d>0$, then it is explosive transient.
Figures~\ref{Fig3} and~\ref{Fig4} sketch directions of mean jumps of the
process in the simplest case of just  two interacting components.
}
\end{example}

\begin{figure}[htbp]
\centering
\begin{tabular}{cc}
   \resizebox{0.4\textwidth}{!}{\includegraphics{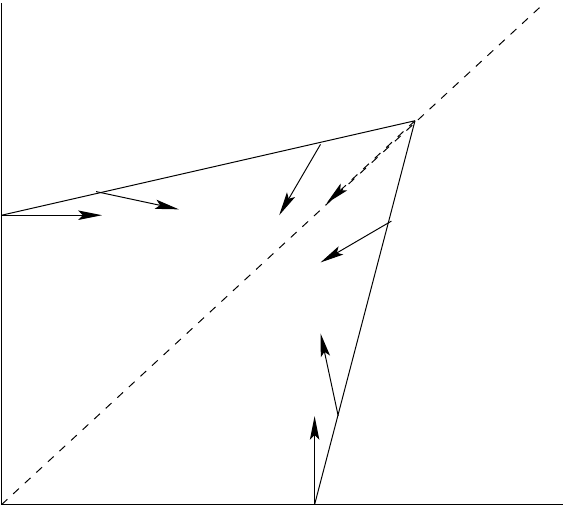}}&
     \resizebox{0.4\textwidth}{!}{\includegraphics{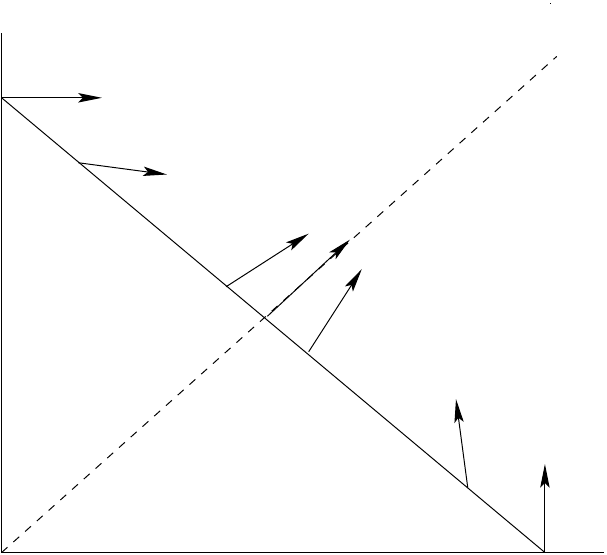}} \\
\end{tabular}
\caption{{\footnotesize $G=\{1\sim 2\}$,
 $\alpha<0$,\,  $\beta>0$.
Left:  $\alpha+\beta<0$;
Right:  $\alpha+\beta\geq 0$.
}}
\label{Fig3}
\end{figure}

\begin{figure}[ht]
\begin{center}
\begin{tabular}{cc}
\resizebox{0.4\textwidth}{!}{\includegraphics{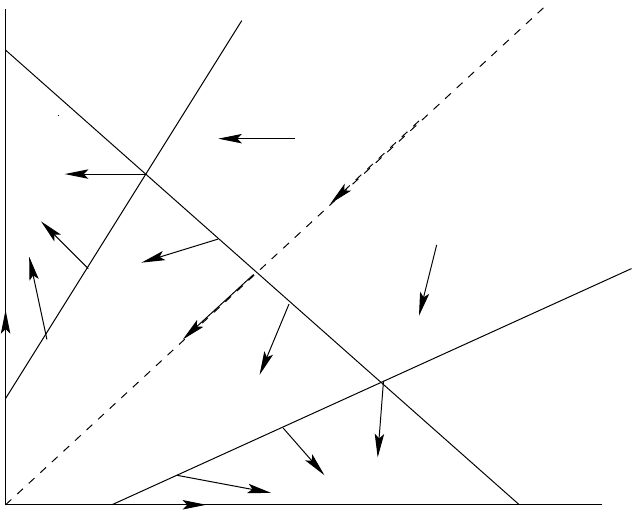}}&
     \resizebox{0.4\textwidth}{!}{\includegraphics{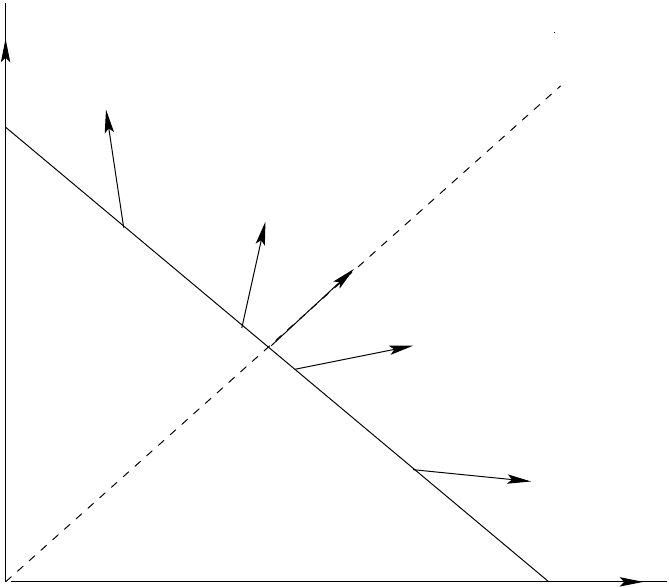}} \\
\end{tabular}
\caption{{\footnotesize $G=\{1\sim 2\}$,  $\alpha>0$,\,  $\beta<0$.
Left:  $\alpha+\beta<0$;
Right:  $\alpha+\beta>0$.
}}
\label{Fig4}
\end{center}
\end{figure}

\begin{example}\label{EK1m}
{\rm 
 Assume that the graph  $G$ is  a star $\mathsf{K}_{1,m}$ with $m=n-1$ non-central
 vertices, where $m\geq 1$. 
A direct computation gives that $\lambda_1=\sqrt{m}$.
Hence, 
the Markov chain is positive recurrent if and
 only if $\ga<0$ and $\alpha+\beta\sqrt{m}<0$. 
If $\alpha<0$ and 
$\alpha+\beta\sqrt{m}\geq 0$, then
the Markov chain is transient.
}
\end{example}

\begin{example}
{\rm 
Consider a linear graph with  $n+2$ vertices, where $n\in \Z_{+}$, that is 
 a graph whose 
 vertices can be enumerated by natural numbers $1,\ldots, n+2$, and such that 
$1\sim 2 \sim \cdots \sim n+1\sim n+2$. 
If $n=0$, then this is the simplest case of  a constant degree graph,
and if $n=1$, then this is the simplest case of  a star graph. 
If $n\geq 2$, then the adjacency 
matrix $\A$ of this graph is the  tridiagonal  matrix given below
$$
\A=\begin{bmatrix}
0 & 1 &  & & & & O  \\
1 & 0 & 1 &  & &  \\
 & 1& \cdot & \cdot & & & \\
& & \cdot & \cdot & \cdot &  & \\
& & & \cdot & \cdot & \cdot &  \\
& & & & \cdot & \cdot & 1\\
O& & & & & 1 & 0
\end{bmatrix}_{(n+2)\times (n+2)}
$$
This is the   tridiagonal symmetric Toeplitz matrix which 
eigenvalues are given by
$$\lambda_k=2\cos\left(\frac{k\pi}{n+3}\right),\, k=1,\ldots, n+2.$$
The maximal eigenvalue is
 $\lambda_1=2\cos\left(\frac{\pi}{n+3}\right)$.
 Thus, the CTMC $X(t)$  is positive recurrent if 
and only  if $\alpha<0$ and $\alpha+2\beta\cos\left(\frac{\pi}{n+3}\right)<0$.
If $\alpha<0$ and $\alpha+2\beta\cos\left(\frac{\pi}{n+3}\right)\geq 0$,  then 
the Markov chain is transient.
}
\end{example}

Two next examples (from~\cite{Janson}) are the cases when the process is null recurrent,
and this essentially is determined by 
the independence number $\kk(G)$.

\begin{example}\label{EK1m0}
{\rm 
 Let, as in Example \ref{EK1m},  $G$ be  a star $\mathsf{K}_{1,m}$, where $m\geq 1$. 
Then $\kk(G)=m=n-1$.
Assume that $\ga=0$ and $\gb<0$.
Then,
the Markov chain is null recurrent if
$n\le3$,
and transient if $n\ge4$.
}
\end{example}

\begin{example}\label{EC0}
{\rm 
 Let  $G$ be  a cycle $\mathsf{C}_{n}$, where $n\geq 3$. 
Then $\kk(G)=\lfloor n/2\rfloor$.
Assume that $\ga=0$ and $\gb<0$.
Then,
the Markov chain is null recurrent if
$n\le5$,
and transient if $n\ge6$.
}
\end{example}

\subsection{The model with finite components}
\label{finite-N}

In this section we consider a version of the  reversible model obtained by 
limiting the maximum number of particles at a vertex. 

As before, let $G=(V,E)$ be a finite connected graph
with the adjacency matrix $\A$.
Let  $\Lambda_{N}=\{0,...,N\}^V$, where $N\geq 1$ is a given natural number.
Consider  a CTMC $\hat X(t)=(\hat X_v(t),\, v\in V)\in\Lambda_{N}$
 with transition rates 
 $\hat r_{\alpha, \beta}(\bx,\by),\, \bx, \by\in \Z_{+}^V$ given by 
\begin{equation}
\label{ratesN}
\hat r_{\alpha, \beta}(\bx,\by)=\begin{cases}
e^{\alpha x_v+\beta\sum_{u\sim v}x_u},& \mbox{ for }\by=\bx+\be_v \mbox{ and }\bx=(x_v,\, v\in V): x_v<N,\\
1,& 
\mbox{ for } \by=\bx-\be_v \mbox{ and } \bx=(x_v,\, v\in V): x_v>0,\\
0,& \text{otherwise}.
\end{cases}
\end{equation}
where $\alpha, \beta\in\R$  are given constants.
In other words, the CTMC $\hX(t)$ evolves precisely  as the CTMC $X(t)$ 
transition rates~\eqref{rates} subject 
to the constraint that at most  $N$ particles can be placed  at a vertex.

Similarly to the CTMC $X(t)$,
 the finite CTMC $\hX(t)$ is irreducible and reversible with the stationary distribution 
 $\mu^{(N)}_{\alpha, \beta, N}(\bx),\, \bx \in \Lambda_N$,
given by 
\begin{equation}
\label{measureN}
\mu^{(N)}_{\alpha, \beta, G}(\bx)=\frac{1}{Z_{\alpha, \beta, G, N}}e^{W(\bx)}
\quad\text{for}\quad
 \bx=(x_v,\, v\in V)\in \Lambda_N^V,
\end{equation}
where the function $U$ is defined in~\eqref{U},
and 
$$Z_{\alpha, \beta, G, N}=\sum\limits_{\bx\in  \Lambda_N^V}
e^{W(\bx)}.$$
By the ergodic theorem for finite irreducible  CTMC's the distribution of 
$\hX(t)$ converges to the stationary distribution~\eqref{measureN},
as $t\to \infty$.

\begin{remark}
{\rm Note that if $N=1$ (in which case the parameter $\alpha$ is redundant) the measure~\eqref{measureN} 
is equivalent to a special case of 
 the celebrated  Ising model on the  graph $G$. Indeed,  the change of 
variables $y_v=2x_v-1$ induces  a  probability measure on $\{-1, 1\}^V$ that is proportional to 
$\exp\left(\frac{\beta}{4}\left(\sum_{v\sim u}y_vy_u+2\sum_{v}y_v\right)\right)$. The latter 
corresponds to  the Ising model with the inverse temperature $\beta/4$ and 
the external field  $h=\beta/2$ on the  graph $G$.  
 }
 \end{remark}

In the rest of this section we 
 show  that, under certain assumptions,  
the probability measure~\eqref{measureN} 
possesses  monotonicity properties
that are similar to those of the ferromagnetic Ising model. 

We start with recalling  some necessary definitions by adopting 
those from~\cite{GHM}.
Let $G=(V,E)$ be an arbitrary graph (not necessarily finite), and let, 
as before,
 $\Lambda_{N}=\{0,..., N\}^{V}$. 
Let ${\cal F}_{G, N}$ be  a standard $\sigma$-algebra  of subsets of $\Lambda_{N}$  
generated by cylinder sets
(if the graph $G=(V,E)$ is finite, then ${\cal F}_{G, N}$ is just a set of all subsets of $\Lambda_{N}$).
Define a  partial order on the set  $\Lambda_{N}$.
  Given  $\bx=(x_v,\, v\in V)\in \Lambda_{N}$ and $\bx'=(x_v',\, v\in V)\in \Lambda_{N}$
we write $\bx\leq \bx'$, if $x_v\leq x_v'$ for all $v\in G$.
A probability measure $\mu$ on $(\Lambda_{N}, {\cal F}_{G, N})$ is said to be monotone
if 
\begin{equation}
\label{mon}
\mu(x_v\geq k|\bx=\bz\text{ off } v)\leq \mu(x_v\geq k|\bx=\by\text{ off } v),
\end{equation}
for all $v\in V$, $k\in \{0,...,N\}$ and $\bz, \by\in \Lambda_{V\setminus\{v\}, N}$  such that $\bz\leq \by$, 
$\mu(\bx=\bz\text{ off } v)>0$ and $\mu(\bx=\by\text{ off } v)>0$.

\begin{theorem}
\label{T1}
Let  $\beta>0$.
Then the probability measure $\muNG$   is monotone.
\end{theorem}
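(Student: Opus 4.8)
The plan is to verify the monotonicity condition \eqref{mon} directly from the explicit form of the stationary distribution $\muNG(\bx)\propto e^{W(\bx)}$. Fix a vertex $v\in V$, a level $k\in\{0,\dots,N\}$, and two boundary configurations $\bz\leq\by$ on $V\setminus\{v\}$ with positive mass. Conditioning on the values off $v$, the conditional law of $x_v$ under $\muNG$ is a probability measure on $\{0,\dots,N\}$ whose weight at the point $j$ is proportional to $e^{W(\bx)}$ with $x_v=j$ and the rest of the coordinates frozen. Reading off \eqref{inv-meas}, the only terms of $W$ that depend on $x_v$ are $\frac{\alpha}{2}x_v(x_v-1)$ and $\beta\sum_{u\sim v}x_u x_v$; the latter equals $\beta\, b\, x_v$ where $b=\sum_{u\sim v}x_u$ is the frozen boundary sum. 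Hence the conditional weight at $j$ is proportional to $w_b(j):=\exp\!\big(\tfrac{\alpha}{2}j(j-1)+\beta b j\big)$, a one-dimensional exponential family in which $b$ enters only through the linear term $\beta b j$ with $\beta>0$.

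The key step is then a standard stochastic-domination / likelihood-ratio argument for one-dimensional distributions. Since $\bz\leq\by$ off $v$, the corresponding boundary sums satisfy $b_{\bz}\leq b_{\by}$. For $b_{\bz}\leq b_{\by}$ the ratio $w_{b_{\by}}(j)/w_{b_{\bz}}(j)=e^{\beta(b_{\by}-b_{\bz})j}$ is nondecreasing in $j$, i.e.\ the family $\{w_b\}$ has the monotone likelihood ratio property in $b$. It is a classical fact that MLR ordering implies stochastic ordering: if $p,q$ are probability vectors on $\{0,\dots,N\}$ with $q(j)/p(j)$ nondecreasing in $j$, then $\sum_{j\geq k}q(j)\geq\sum_{j\geq k}p(j)$ for every $k$. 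Applying this with $p$ the normalized $w_{b_{\bz}}$ and $q$ the normalized $w_{b_{\by}}$ gives exactly
$$
\muNG(x_v\geq k\mid \bx=\bz\text{ off }v)\leq \muNG(x_v\geq k\mid \bx=\by\text{ off }v),
$$
which is \eqref{mon}. Note $\alpha$ plays no role and may have either sign; only $\beta>0$ is used.

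I expect the argument to be essentially routine, so the ``main obstacle'' is really just bookkeeping: one must be careful that in \eqref{inv-meas} the cross term $\beta\sum_{w\sim u}x_w x_u$ contributes to the $x_v$-dependent part exactly the single linear term $\beta(\sum_{u\sim v}x_u)x_v$ (each edge incident to $v$ is counted once), so that after conditioning the problem genuinely reduces to a one-parameter exponential family with the parameter entering linearly; and one should state the MLR$\Rightarrow$stochastic-dominance lemma cleanly (a two-line computation: $\sum_{j\geq k}q(j)\sum_{i<k}p(i)\geq\sum_{j\geq k}p(j)\sum_{i<k}q(i)$ follows termwise from $q(j)p(i)\geq p(j)q(i)$ for $i<k\leq j$). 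No appeal to Holley's criterion or FKG is needed for this direction; those would only be required to upgrade monotonicity to positive association, which the statement does not ask for.
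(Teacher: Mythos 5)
Your proposal is correct and follows essentially the same route as the paper: the paper also reduces to the one-dimensional conditional distribution with weights proportional to $e^{\frac{\alpha k(k-1)}{2}+k\beta(\A\by)_v}$, verifies the likelihood-ratio inequality $p_iq_j\leq p_jq_i$ for $j<i$ using $\beta>0$ and $(\A(\bz-\by))_v\geq 0$, and concludes via a stochastic-domination lemma (its Proposition~\ref{L2}) whose proof is exactly your two-line termwise computation. The only cosmetic difference is that you phrase the key step as the monotone likelihood ratio property, while the paper states the same inequality directly.
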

\begin{proof}[Proof of Theorem~\ref{T1}]
Start with an auxiliary   statement (which generalises  Lemma 3.1 in~\cite{Yamb}).
\begin{proposition}
\label{L2}
Let $\P=(p_k, k\in \Z_{+})$ and $\Q=(q_k,\, k\in \Z_{+})$ be discrete 
probability measures on $\Z_{+}$.
If $p_iq_j\leq p_jq_i$ for all $0\leq j < i$,
then   $\P(\{i: i\geq k\})\leq \Q(\{i: i\geq k\})$ for $k\geq 1$, i.e. 
the measure $\Q$ stochastically dominates the measure $\P$.
\end{proposition}
\begin{proof}
A direct computation gives  that 
\begin{align*}
\P(\{i: i\geq k\})-\Q(\{i: i\geq k\})&=\sum_{i=k}^{\infty}p_i-\sum_{i=k}^{\infty}q_i
\pm\left(\sum_{i=k}^{\infty}p_i\right)\left(\sum_{i=k}^{\infty}q_i\right)\\
&=\sum_{i=k}^{\infty}p_i \sum_{j=0}^{k-1}q_j-\sum_{i=k}^{\infty}q_i\sum_{j=0}^{k-1}p_j
=\sum_{ j=0}^{k-1}\sum_{i= k}^{\infty}(p_iq_j-p_jq_i)\leq 0,
\end{align*}
for $k\geq 1$, as required. 
\end{proof}
Given  a vertex $v\in V$ and  configurations 
$\by, \bz\in \Lambda_{N, V\setminus\{v\}}=\{0,1,...,N\}^{V\setminus\{v\}}$,
such that $\by\leq \bz$,
define probability distributions 
$$\P=(p_k=\muNG(x_v=k|\bx\equiv \by\text{ off } v,\, k=0,...,N)$$
and 
$$\Q=(q_k=\muNG(x_v=k|\bx\equiv \bz\text{ off } v,\, k=0,...,N)$$
and  show that 
\begin{equation}
\label{P<Q}
\P(\{k,N\})\leq \Q(\{k, N\})\quad\text{for}\quad k\in \{0,...,N\}.
\end{equation}
A direct computation gives that
\begin{equation}
\label{conditional}
p_k=\frac{e^{\frac{\alpha k(k-1)}{2}+
k\beta(\A\by)_v}}
{\sum_{i=0}^Ne^{\frac{\alpha i(i-1)}{2}+i\beta(\A\by)_v}}\quad\text{and}\quad
\frac{e^{\frac{\alpha k(k-1)}{2}+
k\beta(\A\bz)_v}}
{\sum_{i=0}^Ne^{\frac{\alpha i(i-1)}{2}+i\beta(\A\bz)_v}}.
\end{equation}
Therefore 
\begin{align*}
p_iq_j-p_jq_i&=\frac{e^{\alpha\frac{i(i-1)+j(j-1)}{2}}e^{i\beta(\A\by)_v+j(\A\bz)_v}
\left(1-e^{(i-j)\beta(\A(\bz-\by))_v}\right)}
{\left[\sum_{i=0}^Ne^{\frac{\alpha i(i-1)}{2}+i\beta(\A\by)_v}\right]
\left[\sum_{i=0}^Ne^{\frac{\alpha i(i-1)}{2}+i\beta(\A\bz)_v}\right]}.
\end{align*}
Since $z_u-y_u\geq 0$,  we have that
$$(\A(\bz-\by))_v=\sum_{u\sim v}(z_u-y_u)\geq 0,$$
which gives  
$1-e^{(i-j)\beta(\A(\bz-\by))_v}\leq 0$, and, hence, 
$p_iq_j\leq p_jq_i$ for $0\leq j < i$. 
Equation~\eqref{P<Q} is now 
follows from  Proposition~\ref{L2}. Consequently, 
the measure $\muNG$ is monotone,  as claimed.
\end{proof}

By~\cite[Theorem 4.11]{GHM}, a monotone  probability measure
on $(\Lambda_{N}, {\cal F}_{G, N})$ has positive correlations, that is 
$\muNG(A\cap B)\geq \muNG(A)\muNG(B)$
for any  increasing events $A, B\in {\cal F}_{G, N}$
(an event $A\in {\cal F}_{G, N}$ is said to be increasing if 
${\bf 1}_{\{\bx\in A\}}\leq {\bf 1}_{\{\bx'\in A\}}$ for 
$\bx\leq \bx'$).
It is well known (e.g. see~\cite{Velenik}, ~\cite{GHM})
that positivity of correlations implies
 existence of the  limit for the probability measure~\eqref{measureN}
 in the large graph limit, i.e. 
 as the underlying graph $G$ indefinitely expands in an appropriate sense.
 For example, consider a sequence of graphs $G_n$
 given by  d-dimensional cubes of volume $n$ 
 centered  at the origin. 
If $\beta>0$, then the sequence of corresponding model 
distributions $\mu^{(N)}_{\alpha, \beta, G_n}$ converges to a limit distribution, 
as $n$ tends to infinity
(convergence is understood in the sense of the weak convergence of finite-dimensional distributions).  
This  limit measure  corresponds, in terminology of statistical physics, 
to the so-called empty (zero)  boundary conditions.
Existence of a limit measure in the case of 
other {\it fixed}  boundary conditions (e.g. when 
all spins on the boundary of a graph $G_n$  are equal to $N$) can be shown 
similarly.  
Uniqueness of the limit measure, i.e. that  the limit measure 
does not depend on the boundary conditions,   is an open problem. 

We are now going to show that  the measure $\muNG$ possesses a monotonicity 
property in the parameter $\beta$. Recall that given 
probability  measures $\mu$ and $\mu'$ 
 on $(\Lambda_{N}, {\cal F}_{G, N})$  the measure  $\mu$ is said 
 to be  dominated by $\mu'$ ($\mu\leq \mu'$), 
if  $\mu(A)\leq \mu'(A)$ for every increasing event
$A\in {\cal F}_{G, N}$. 
\begin{theorem}
\label{T1a}
If $\beta_1\leq \beta_2$, then   $\mu^{(N)}_{\alpha,\beta_1, G}\leq
 \mu^{(N)}_{\alpha,\beta_2, G}$.
\end{theorem}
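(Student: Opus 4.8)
The plan is to verify the standard sufficient condition for stochastic domination between two measures on a partially ordered product space, namely the Holley-type criterion phrased through one-site conditional distributions (see~\cite{GHM}). Concretely, by~\cite[Theorem 4.8]{GHM} (the ``monotone single-site'' characterisation of stochastic domination), it suffices to show that for every vertex $v\in V$, every $k\in\{0,\ldots,N\}$, and every pair of boundary configurations $\by,\bz\in\Lambda_{N,V\setminus\{v\}}$ with $\by\le\bz$, one has
$$
\mu^{(N)}_{\alpha,\beta_1,G}\bigl(x_v\ge k\mid \bx\equiv\by\text{ off }v\bigr)
\ \le\
\mu^{(N)}_{\alpha,\beta_2,G}\bigl(x_v\ge k\mid \bx\equiv\bz\text{ off }v\bigr).
$$
Since this already subsumes the monotonicity of each measure (Theorem~\ref{T1}) when $\beta_1=\beta_2$, the real content is the extra comparison $\beta_1\le\beta_2$ together with $\by\le\bz$.

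The key computational input is exactly the explicit one-site conditional formula~\eqref{conditional} already derived in the proof of Theorem~\ref{T1}, but now with possibly different parameters: writing $\P=(p_k)$ for the conditional law of $x_v$ under $\mu^{(N)}_{\alpha,\beta_1,G}$ given $\by$ off $v$, and $\Q=(q_k)$ for the conditional law under $\mu^{(N)}_{\alpha,\beta_2,G}$ given $\bz$ off $v$, we have
$$
p_k\propto e^{\frac{\alpha k(k-1)}{2}+k\beta_1(\A\by)_v},\qquad
q_k\propto e^{\frac{\alpha k(k-1)}{2}+k\beta_2(\A\bz)_v}.
$$
Then for $0\le j<i$ the sign of $p_iq_j-p_jq_i$ is, as in the proof of Theorem~\ref{T1}, the sign of $1-e^{(i-j)[\beta_2(\A\bz)_v-\beta_1(\A\by)_v]}$. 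Here is where both hypotheses enter: since $\by\le\bz$ we have $(\A\by)_v\le(\A\bz)_v$, and since $\beta_1\le\beta_2$ with $(\A\bz)_v\ge 0$ (adjacency entries are nonnegative and $\bz\ge\bo$), we get $\beta_1(\A\by)_v\le\beta_2(\A\bz)_v$ — note this last step uses $\beta_1,\beta_2>0$, which is the standing hypothesis. Hence $(i-j)[\beta_2(\A\bz)_v-\beta_1(\A\by)_v]\ge 0$, so $p_iq_j-p_jq_i\le 0$, i.e. $p_iq_j\le p_jq_i$ for all $0\le j<i$. By Proposition~\ref{L2} this gives $\P(\{i:i\ge k\})\le\Q(\{i:i\ge k\})$ for all $k$, which is precisely the single-site domination inequality above.

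The one subtlety to be careful about — and the place where I expect the only real friction — is the precise form of the single-site criterion: Theorem 4.8 of~\cite{GHM} requires the one-site conditionals to be monotone in the conditioning configuration \emph{simultaneously} with the parameter change, which is exactly what the displayed inequality asserts, but one must check that the comparison holds for \emph{every} admissible pair $(\by,\bz)$ with $\by\le\bz$ (not merely $\by=\bz$), including the edge cases where one of the conditional probabilities is zero — these are handled trivially since the formula~\eqref{conditional} is strictly positive whenever $N\ge 1$. Once the single-site inequality is in hand for all $v$, $k$, and all ordered pairs, invoking~\cite[Theorem 4.8]{GHM} yields $\mu^{(N)}_{\alpha,\beta_1,G}\le\mu^{(N)}_{\alpha,\beta_2,G}$ in the stochastic order, completing the proof.
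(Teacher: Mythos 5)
Your proposal is correct and takes essentially the same route as the paper: Holley's criterion from~\cite[Theorem 4.8]{GHM} applied to the explicit single-site conditionals~\eqref{conditional}, with Proposition~\ref{L2} turning the cross-product inequality $p_iq_j\le p_jq_i$ into domination of the conditional laws. The only (minor) difference is that you verify the comparison for every ordered pair $\by\le\bz$ of outside configurations --- which is where positivity of the $\beta$'s enters --- whereas the paper compares the two conditionals only at a common configuration $\bx$, the extension to ordered pairs being supplied by the monotonicity already proved in Theorem~\ref{T1}.
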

\begin{proof}
Given  $\bx\in\Lambda_{N}$ let  $p_k=\mu_{\alpha,\beta_1, N}(x_v=k|\bx)$   and
$q_k=\mu_{\alpha,\beta_2, N}(x_v=k|\bx)$  for  $k=0,...,N$,
and consider probability distributions 
 $\P=(p_k, k=0,...,N)$ and $\Q=(q_k,\, k=0,...,N)$ on $\{0,...,N\}$.
By the Holley theorem (e.g. Theorem 4.8 in~\cite{GHM}), it suffices 
to show that $\P\leq \Q$, i.e.  $\P(\{k,N\})\leq \Q(\{k, N\})$ for $k=0,...,N-1$.
Using equation~\eqref{conditional}, as in the proof of Theorem~\ref{T1},
we obtain that 
$p_k\sim 
e^{\frac{\alpha k(k-1)}{2}+k\beta_1(\A\bx)_v}$ and 
$q_k\sim e^{\frac{\alpha k(k-1)}{2}+k\beta_2(\A\bx)_v}$, $k=0,...,N$.  
Since $\beta_2-\beta_1\geq 0$ we have that 
$\beta_2(\A\bx)_v-\beta_1(\A\bx)_v=(\beta_2-\beta_1)\sum_{u\in V}x_u\geq 0$, and, hence, 
\begin{align*}
p_iq_j-p_jq_i&\sim e^{\alpha \frac{i(i-1)+j(j-1)}{2}}e^{i\beta_1(\A\bx)_v+j\beta_2(\A\bx)_v}
\left(1-e^{(i-j)\left[\beta_2(\A\bx)_v-\beta_1(\A\bx)_v\right]}\right)\leq 0, \, \text{ if } \, 0\leq j<i.
\end{align*}
By Proposition~\ref{L2}, $\P\leq \Q$, and the theorem follows.
\end{proof}

\section{CSA point process}
\label{CSA-point}

In this section we consider a point process  motivated by the CSA model.
We call this process by the CSA point process. 
The construction of the  CSA point process is reminiscent 
to the CSA time series model in Section~\ref{CSA-continuum}.
The key  difference  between the two models is that the CSA point process 
is a model for {\it unordered} point patterns, while 
the  CSA time series model is a  model for  {\it sequential} point patterns.

The CSA point process is defined, similarly to other 
point processes, as  a probability measure on the set of finite 
 point configurations of a subset of Euclidean space.
 Such a  measure is usually
 specified  by a density with respect  to the Poisson point process with the unit intensity.

Start with some notations and definitions.
Let  $D$ be a compact convex subset of  $\R^d$
 that has a positive Lebesgue measure.
For  $n\geq 1$ define   $n-$point configuration in $D$  as unordered 
set of points $\bx=\{x_1, \ldots, x_n\}$, $x_i\in  D,\, i=1,\ldots, n$, such that 
 $x_i\neq x_j$ $i\neq j$.
Let  $F$ be a set of all finite point configurations in $D$ including 
the empty set $\emptyset$ (the empty set corresponds to $n=0$).
 Let  ${\cal F}$ be a $\sigma-$ algebra of subsets of $F$,
 such that all maps   $\bx \to |\bx\cap B|$, where  $B\subseteq D$ and $\|\cdot\|$ is 
 the cardinality  of a discrete set, are measurable 
 with respect to ${\cal F}$.

Let  $\P_{\Pi}$ be the distribution of Poisson point process  with the unit intensity
on the set $D$, i.e. 
it is the probability measure on $(F,{\cal F})$ given by 
\begin{equation}
\label{d1}
\P_{\Pi}(A)=e^{-|D|} \Big( {\bf 1}_{\{\emptyset \in A\}} + \sum\limits_{n=1}^{\infty}\frac{1}{n!}
\int\limits_{D^n}1_{\{\{x_1,\ldots,x_n\}\in A\}} dx_1\ldots dx_n \Big),\,\,
A\in {\cal F},
\end{equation}
where ${\bf 1}_B$ denotes the indicator function of set $B$.

The CSA point process with the interaction radius $R>0$ and parameters 
$(\beta_m\geq 0,\,m\in\Z_{+})$
is a probability measure on 
$(F,{\cal F})$ specified by the following density (with respect to measure~\eqref{d1})
\begin{equation}
\label{den-csa}
f(\bx)=Z^{-1}\prod_{x_k\in \bx}
\beta_{\nu(x_k, \bx)},
\end{equation}
where $\nu(x_k,\bx)$ is the number of neighbours of a point $x_k$ in a finite point 
configuration $\bx=\{x_1,...,\}$ (defined in~\eqref{nn}),
  \begin{equation}
\label{stat}
Z=e^{-|D|} \Big(1+\sum\limits_{n=1}^{\infty}\frac{1}{n!}
\int\limits_{D^n}\prod_{k=1}^{n}
\beta_{\nu(x_k, \bx)}dx_1\ldots dx_n \Big),
\end{equation}
i.e. $Z$ is the normalising constant.

The process is well defined if $Z<\infty$. 
It was shown in~\cite[Lemma 1]{GrabVS} that, 
if there exists a constant $C>0$,  such that 
 $\beta_m\leq C m^{\alpha}$ for all $m$ with some $\alpha<1$, then 
the CSA point process~\eqref{den-csa} is well-defined.

It turns out that the CSA point process is a special case of the class of 
 interacting neighbours (INP) point process (introduced in~\cite{GrabSar}).
 An INP process is specified 
by a density (with respect to the Poisson point process with the unit intensity)
proportional to a function of the form 
$
\prod\limits_{x_k\in \bx} g(x_k, \bx)$,
where, in turn, 
$g: D\times F\rightarrow \R_{+}$ is a non-negative measurable function.
The density~\eqref{den-csa}
is obtained by setting 
$$
 g(x, \bz)= \sum_{m \geq 0} \beta_m 1_{\{|\bz|=m\}}.
$$

It is easy to see that the construction of the CSA point  process is reminiscent  to the 
 construction of  CSA model for  time series of spatial locations in 
 Section~\ref{CSA-continuum}.  By this similarity
  the CSA point process is also   useful for modelling 
 a wide spectrum of  point configurations from regular ones  to various clustered  point
 patterns.  
 
 Some special cases of the  CSA  point process
 are  the well known in spatial statistics point processes. 
 Consider  several  examples.
\begin{enumerate}
\item  A Poisson point process in the domain $D$ with the intensity $\beta>0$
is obtained for $\beta_i\equiv \beta,\, i\geq 0$.
\item Assume that $\beta_0=\beta$ and $\beta_i=0$ for $i\geq 1$.
The corresponding process is the well known process with hard core interaction
of intensity $\beta$ and the interaction radius $R$.
Realisations of a hard  processes are point patterns, in which 
 the distance between any two points is not less than the interaction 
radius $R$, i.e. a point has no neighbours. 
\item A natural generalisation of the process with the hard core interaction 
is the CSA  process  with a finite number of non zero parameters, that is 
 $\beta_i>0$ for $0\leq i\leq N$ and $\beta_i=0$ for $i>N$, where $N\geq 0$ 
 is a given integer  (if $N=0$, then we  obtain the 
 process with the hard core interaction). 
 Realisations of such a process  are point patterns, in which 
 a point can have no more than $N$ neighbours.
 \item The famous in spatial statistics 
 Strauss point process with the parameters $\alpha>0$ and $0<\gamma<1$
 is obtained by setting $\beta_i=\alpha\gamma^{i/2}$,\, $i\geq 0$.
Traditionally, its distribution is 
specified by a density (with respect to  Poisson point process with the unit intensity) proportional to the function $\alpha^{|\bx|}\gamma^{s(\bx)}$, where $s(\bx)$
is the number of pairs of neighbours in the 
configuration $\bx$. It is easy  to see that $s(\bx)=1/2\sum_{x_k\in \bx}
\nu(x_k, \bx)$, i.e. the density~\eqref{den-csa} is 
 the density of the  Strauss process for the indicated choice of the parameters. 
\end{enumerate}

Consider the CSA point process  with a finite number 
of non zero  $\beta-$parameters, i.e. the process in the item 3 above.
Parameters of this process can be estimated by adopting the 
estimation procedure 
described in the case of the CSA time series model in Section~\ref{mle-csa}.
Namely, assume, as in Section~\ref{mle-csa}  that the interaction radius $R$ is known
(or, somehow estimated).
Then, given an observation $\bx=\{x_1,...,x_n\}$
the parameter $N$ (the number 
of non-zero $\beta-$parameters) can be estimated by 
$\widehat{N}=\max_{x\in \bx}\nu(x, \bx)$.
Non-zero parameters $\beta$ can be estimated by using MLE. However, unlike 
the CSA time series model, the computation 
of MLE estimators here is not so straightforward. The difficulty is that 
 the computation of the  model likelihood in the case of the CSA point process
requires the computation of the normalising constant~\eqref{stat}
which is not analytically tractable.
This is the well-known common problem in MLE estimation 
of parameters of a point process given by a density 
with respect to the Poisson point process.
The normalising constant can be computed/estimated  numerically 
by using the Markov chain Monte-Carlo method. Implementation of the latter 
is not straightforward for point processes and requires advanced simulation techniques 
(e.g. the method of perfect simulation).
This is in contrast to the case of the CSA time series model, where the classic 
 Monte-Carlo is rather effective (see Remark~\ref{MC-for-CSA}).

\end{document}